\def\Xint#1{\mathchoice
   {\XXint\displaystyle\textstyle{#1}}%
   {\XXint\textstyle\scriptstyle{#1}}%
   {\XXint\scriptstyle\scriptscriptstyle{#1}}%
   {\XXint\scriptscriptstyle\scriptscriptstyle{#1}}%
   \!\int}
\def\XXint#1#2#3{{\setbox0=\hbox{$#1{#2#3}{\int}$}
     \vcenter{\hbox{$#2#3$}}\kern-.5\wd0}}
\def\dashint{\Xint-}
\def \dsp {\displaystyle}
\newtheorem{theorem}{Theorem}[section]
\newtheorem{proposition}[theorem]{Proposition}
\newtheorem{example}{Example}[section] 
\newtheorem{remark}{Remark}[section]
\DeclareMathAlphabet{\mathpzc}{OT1}{pzc}{m}{it}
\newcommand{\T}{\mathbb{T}}
\newcommand{\PP}{\mathbb{P}}
\newcommand{\bl}{{\tiny\bm[\normalsize}}
\newcommand{\br}{{\tiny\bm]\normalsize}}
\renewcommand{\Re}{\mathop{\rm Re}}
\renewcommand{\Im}{\mathop{\rm Im}}
\begin{document}

\title{Quadrature rules from a $R_{II}$ type recurrence relation and associated quadrature rules on the unit circle\thanks{This work is part of the PhD thesis of the second author at UNESP and supported by a grant from CAPES of Brazil. The first author is partially supported by funds from CNPq (305208/2015-2,  402939/2016-6) of Brazil. The third author is partially supported by funds from FAPESP (2016/09906-0, 2017/12324-6) and CNPq (305073/2014-1) of Brazil.}}

\author
{
 {Cleonice F. Bracciali, \ Junior A. Pereira, \ A. Sri Ranga}
  \\[0.5ex]
 {\small $^1$Depto de Matem\'{a}tica Aplicada, IBILCE, } \\
 {\small UNESP - Univ Estadual Paulista,} 
 {\small 15054-000, S\~{a}o Jos\'{e} do Rio Preto, SP, Brazil.  } \\
 {\small cleonice.bracciali@unesp.br, \ junior.gusto@hotmail.com, \ sri.ranga@unesp.br}
  }

\maketitle

\begin{abstract}
We consider the theoretical and numerical aspects of the quadrature rules associated with a sequence of polynomials generated by a special $R_{II}$ recurrence relation. We also look into some  methods for generating the nodes (which lie on the real line) and the positive weights of these quadrature rules.  With a simple transformation these quadrature rules on the real line also lead to  certain positive quadrature rules of highest algebraic degree of precision on the unit circle. This way, we also introduce  new approaches  to evaluate the nodes and weights of these specific quadrature rules on the unit circle.  \\

{\bf Keyword:} Orthogonal polynomials on the unit circle; Quadrature rules; $R_{II}$ type  recurrence relation.
\end{abstract}

\section{Introduction}
\label{Sec-Intro}

Positive quadrature rules or quadrature rules with positive weights are very important in the numerical evaluation of integrals.  In general they are also quadrature rules with certain kind of highest degree of precision, and thus,  have  served as nice tools for resolving many mathematical problems (see, for example, \cite{Gautschi-1981} and \cite{Ismail-1989}). In this paper we will consider the theoretical and numerical aspects of the positive quadrature rules associated with a sequences of polynomials generated by a special $R_{II}$ recurrence relation which was studied recently in \cite{Ismail-Ranga}. We will look into the methods for generating the nodes (which lie on the real line) and the positive weights of these quadrature rules.  With a simple transformation these quadrature rules on the real line also lead to  certain positive quadrature rules of highest algebraic degree of precision on the unit circle (i.e., Gaussian type quadrature rules associated with para-orthogonal polynomials on the unit circle). In this way, we also introduce  new approaches  to evaluate the nodes and weights of these specific quadrature rules on the unit circle.  

A general $R_{II}$ type recurrence relation takes the form   
\begin{equation*}  \label{Eq-TTRR-general-R2-type}
P_{n+1}(x)=\sigma_{n+1}(x-v_{n+1})P_n(x)-u_{n+1}(x-a_n)(x-b_n)P_{n-1}(x), 
\end{equation*}
for $n \geq 1$, with $P_0(x)=1$ and $P_1(x)=\sigma_1(x-v_1),$  where $\{\sigma_n\}, \{v_n\}, \{u_n\}, \{a_n\}$ and $\{b_n\}$ are complex sequences. A systematic study of such recurrence relations started in the work of Ismail and Masson \cite{Ismail-Masson-JAT1995}. In \cite{Ismail-Masson-JAT1995} these recurrence relations were referred to as those associated with $R_{II}$ type continued fractions. However, in the work of Zhedanov \cite{Zhedanov-JAT1999}, where the associated generalized eigenvalue problems have been established, these recurrence relations were simply referred to as $R_{II}$ type recurrence relations. 

The special $R_{II}$ type recurrence relation that we will deal with in the present paper is 
\begin{equation} \label{Eq-TTRR-Special-R2-type}
P_{n+1}(x)=(x-c_{n+1})P_n(x)-d_{n+1}(x^2+1)P_{n-1}(x), \quad n \geq 1,
\end{equation}
with $P_0(x)=1$ and $P_1(x)=x-c_1$, where $\{c_n\}_{n\geq 1}$  and $\{d_{n+1}\}_{n\geq 1}$ are  real sequences and further $\{d_{n+1}\}_{n\geq 1}$ is a positive chain sequence. That is, there exists a sequence $\{\ell_{n+1}\}_{n \geq 0}$, where $\ell_1 = 0$ and $0 < \ell_{n+1} < 1$, $n \geq 1$, such that  $d_{n+1} = (1-\ell_{n})\ell_{n+1}$, $n \geq 1$. 

Any sequence $\{g_{n+1}\}_{n \geq 0}$, where $0 \leq g_1 < 1$ and $0 < g_{n+1} < 1$, $n \geq 1$, such that  $d_{n+1} = (1-g_{n})g_{n+1}$, $n \geq 1$, is called a parameter sequence of the positive chain sequence $\{d_{n+1}\}_{n\geq 1}$. The parameter sequence $\{\ell_{n+1}\}_{n \geq 0}$ is normally referred to as the minimal parameter sequence of $\{d_{n+1}\}_{n\geq 1}$. There are positive chain sequences that have only the minimal parameter sequence. If $\{d_{n+1}\}_{n\geq 1}$ is not such a positive chain sequence, then it has infinitely many parameter sequences $\{g_{n+1}\}_{n \geq 0}$.  However, there is one parameter sequence of $\{d_{n+1}\}_{n\geq 1}$, which we denote by $\{M_{n+1}\}_{n \geq 0}$, that is characterized by  $0 < g_{n} < M_{n}$, $n \geq 1$, and  
\[
    \sum_{n = 2}^{\infty} \prod_{k=2}^{n} \frac{M_{k}}{1-M_{k} }  \ = \infty. 
\]
The above infinite series formula is known as the Wall's criteria for maximal parameter sequence. For definitions and for many of the properties associated with positive chain sequences, we refer to Chihara \cite{Chihara-Book}. 

From \eqref{Eq-TTRR-Special-R2-type} one can verify that $P_n$ is a polynomial of exact degree $n$ and that the leading coefficient $\mathfrak{p}_{n}$ of $P_n$ is 
\begin{equation} \label{Eq-LeadCoeff-Pn}
    \mathfrak{p}_{n} = \prod_{k=1}^{n}(1-\ell_{k}). 
\end{equation}
The nodes of the quadrature rule on the real line that we study in the present manuscript are the zeros of the polynomials $P_{n}$.

The recurrence relation \eqref{Eq-TTRR-Special-R2-type} was the principal object of study in the recent paper  \cite{Ismail-Ranga}. It is known that the zeros of the polynomial $P_{n}$ obtained from \eqref{Eq-TTRR-Special-R2-type} are real and simple and they also interlace with the zeros of the polynomial $P_{n+1}$. That is, if we denote the zeros of $P_{n}$ by $x_{n,k}$, $k=1,2, \ldots, n$, where $x_{n,k} > x_{n, k+1}$, $1 < k < n-1$, then 
\begin{equation} \label{Eq-zero-arrangement}
     x_{n+1,1} > x_{n,1} > x_{n+1,2} > \cdots > x_{n+1,n} > x_{n,n} > x_{n+1,n+1}, \quad n \geq 1. 
\end{equation}

From \cite{Ismail-Ranga} it follows that $P_{n}$ is the characteristic polynomial of the generalized tridiagonal eigenvalue problem or tridiagonal pencil $(\mathbf{A}_n, \mathbf{B}_n)$ given by 
\begin{equation} \label{Eq-Special-GEV-problem}
\mathbf{A}_n \mathbf{u}_{n}=x \mathbf{B}_n \mathbf{u}_{n}, \quad n \geq 1,
\end{equation}
where the Hermitian tridiagonal matrices $\mathbf{A}_{n}$ and  $\mathbf{B}_{n}$
are, respectively,  
\[
    \left[ 
       \begin{array}{cccccc}
        c_1  & \! i\sqrt{d_2} & 0  & \!\!\! \cdots & 0 & 0  \\[1ex]
        \!\!\! -i\sqrt{d_2}  & c_2  & \!\! i\sqrt{d_3} & \!\!\! \cdots & 0 & 0 \\[1ex]
         0  & \!\!\!\!\! -i\sqrt{d_3}  & c_3 & \!\!\! \cdots & 0 & 0  \\[1ex]
         \vdots  & \vdots  & \vdots &    & \vdots & \vdots \\[1ex]
          0  & 0  & 0 & \!\!\! \cdots & c_{n-1} & \!\!\! i\sqrt{d_{n}}  \\[1ex]
          0  & 0  & 0 & \!\!\! \cdots & \!\!\!\!\!  -i\sqrt{d_{n}} & c_{n} 
      \end{array} \! \right], 
\quad 
    \left[ 
       \begin{array}{cccccc}
        1 & \!\sqrt{d_2} & 0 & \cdots & 0 & 0  \\[1ex]
        \!\sqrt{d_2} & 1 &  \!\sqrt{d_3} & \cdots & 0 & 0 \\[1ex]
         0 & \!\sqrt{d_3} & 1 & \cdots & 0 & 0  \\[1ex]
         \vdots & \vdots & \vdots &    & \vdots & \vdots \\[1ex]
          0 & 0 & 0 & \cdots & 1 & \!\sqrt{d_{n}}  \\[1ex]
          0 & 0 & 0 & \cdots & \!\!\!\sqrt{d_{n}} & 1 \\[1ex]
      \end{array} \right].
\] \\[0ex]
The sequence $\{d_{n+1}\}_{n \geq 1}$ being a positive chain sequence is also equivalent of saying that the matrices $\mathbf{B}_n$, $n \geq 1$ are all positive definite matrices. 

Therefore,  the problem of determining the zeros of $P_n$ (i.e., determining the nodes of the associated $n$-point quadrature rule) is also a problem of determining the eigenvalues of this generalized eigenvalue problem. To be precise, if $u_{0}(x) = P_0(x)$,  
\begin{equation} \label{Eq-Rationals-1}
      u_n(x) =  \frac{(-1)^{n}}{(x-i)^{n} \prod_{j=1}^{n}\sqrt{d_{j+1}}} P_n(x) , \quad n \geq 1,
\end{equation} 
and $\mathbf{u}_{n}(x) = \big[u_{n,0}(x), u_{n,1}(x), \ldots, u_{n,n-1}(x)\big]^{T}$, then $\mathbf{A}_n \mathbf{u}_{n}(x_{n,j}) = x_{n,j} \mathbf{B}_n \mathbf{u}_{n}(x_{n,j})$, $ n \geq 1$ and 
\begin{equation} \label{Eq-PosDef-formula}
 \begin{array}{l}
   \dsp   \frac{P_{n}^{\prime}(x) P_{n-1}(x) - P_{n-1}^{\prime}(x) P_{n}(x)}{(x^2+1)^{n-1} d_2 d_3 \cdots d_{n}}   = \mathbf{u}_n(x)^{H} \mathbf{B}_{n}   \mathbf{u}_n(x) \, > 0, \quad n \geq 2.
 \end{array} 
\end{equation}

Section \ref{Sec-Statements} of this paper, which brings the statements of our main objectives, starts by giving the necessary preliminary results and concepts that will be required for the development of the results obtained in the paper. Some of these preliminary results are composed as Theorem \ref{Thm-Basics}.   

Theorem \ref{Thm-QuadRule-RealLine} in Section \ref{Sec-Statements}, which is one of the main results in this paper, is with respect to the quadrature rules that follow from the $R_{II}$ type recurrence \eqref{Eq-TTRR-Special-R2-type}. That is, with respect to the quadrature rules that follow from the sequences $\{c_{n}\}_{n \geq 1}$ and $\{d_{n+1}\}_{n \geq 1}$. The proof of Theorem \ref{Thm-QuadRule-RealLine} is given in Section \ref{Sec-QuadRules-RL}.
Sequence of polynomials $\{Q_{n}\}_{n\geq 0}$ which satisfy the same three term recurrence as that of $\{P_{n}\}_{n\geq 0}$, but with the initial conditions $Q_{0}(x) = 0$ and $Q_{1}(x) = constant$,  also play an important role in Section \ref{Sec-QuadRules-RL}. 

The results that cover the  related quadrature rules on the unit circle, the other main objective of this paper, are presented in Theorem \ref{Thm-QuadRule-UnitCircle}. The proof of  Theorem \ref{Thm-QuadRule-UnitCircle} and other related results such as how to get the values of the coefficients $\{c_{n}\}_{n \geq 1}$ and $\{d_{n+1}\}_{n \geq 1}$ form a given measure on the unit circle are  given in Section \ref{Sec-QuadRules-UC}.  

In Section \ref{Sec-Eample-simple} we provide a simple example, where the polynomials and the corresponding nodes and weights of the quadrature rule in Theorem \ref{Thm-QuadRule-RealLine} are given explicitly. 
In Section \ref{Sec-NumEval-NodesWeights} we explore two numerical techniques for the generation of the nodes and weights of the quadrature rules given by Theorem \ref{Thm-QuadRule-RealLine}. Finally, in Section \ref{Sec-Applications-of-QR} we consider some applications of the quadrature rules given by Theorems \ref{Thm-QuadRule-RealLine} and \ref{Thm-QuadRule-UnitCircle}.

\setcounter{equation}{0}
\section{Some preliminary results  and statements of the main results} \label{Sec-Statements}

There are simple connections between the polynomials $\{P_n\}_{n \geq 0}$ given by \eqref{Eq-TTRR-Special-R2-type} and orthogonal polynomials on the unit circle.  Hence, let us first recall some information regarding measures and orthogonal polynomials on the unit circle.  
 
  Given a sequence of complex numbers $\{\alpha_{n}\}_{n \geq 0}$, where $|\alpha_{n}| < 1$, $n \geq 0$, then it is known that there exists a unique probability measure $\mu$ on the unit circle such that the sequence of monic polynomials $\{\Phi_{n}\}_{n \geq 0}$ generated by
\[
    \Phi_{n+1}(z) = z \Phi_{n}(z) - \overline{\alpha}_{n} \Phi_{n}^{\ast}(z), \quad n \geq 0,
\]
are the monic orthogonal polynomials on the unit circle with respect to the measure $\mu$.  That is,
\[
    \int_{\T} \overline{\Phi_{m}(\zeta)} \Phi_{n}(\zeta) d \mu(\zeta) =  \kappa_{n}^{-2} \delta_{nm}, \quad n,  m = 0, 1, 2, \ldots  , 
\]
where $\T= \{ \zeta = e^{i\theta}:  0\leq \theta < 2\pi\}$. 

 This  unique map between the sequence $\{\alpha_{n}\}_{n \geq 0}$ and the measure $\mu$  is exactly the equivalent result on the unit circle of the so called Favard Theorem on the real line. For a simple proof of this Favard Theorem on the unit circle we cite \cite{ENZG1}. Other proofs can be found in Simon \cite{Simon-book-p1}, where  the sequence  $\{\alpha_{n}\}_{n \geq 0}$, based on \cite{Verblunsky-1935},   is referred to as the sequence of Verblunsky coefficients associated with the measure $\mu$. When necessary, we will use the notation $\alpha_{n}(\mu)$ for the Verblunsky coefficients to indicate their connection to the measure $\mu$. 

Given a probability measure $\mu$ on the unit circle, in order to indicate the size $\epsilon$ of its mass at $\zeta = 1$, we will also use the notation $\mu_{\underline{\epsilon}}$. Hence, the notation $\mu_{\underline{0}}$ means the measure $\mu$ does not have a pure point at $\zeta=1$. 

We now summarize in Theorem \ref{Thm-Basics} below  some results which will be of importance throughout this paper. 

\begin{theorem}  \label{Thm-Basics}
Given the three term recurrence \eqref{Eq-TTRR-Special-R2-type}, consider the map from \linebreak  $(\{c_{n}\}_{n \geq 1},\{d_{n+1}\}_{n \geq 1})$ to  $\{\alpha_n\}_{n \geq 0}$, where $|\alpha_n| < 1$, $n \geq 0$ , given by 
\begin{equation} \label{Eq-Verblunsky-Characterization-1}
    \alpha_{n-1} = - \frac{1}{\tau_{n}}\,\frac{1-2\ell_{n+1} - i c_{n+1}}{1 - i c_{n+1}},  \quad n \geq 1.
\end{equation} 
Here,  $\tau_{n} = \prod_{k=1}^{n} [(1-ic_k)/(1+ic_k)]$, $n \geq 1$ and $\{\ell_{n+1}\}_{n \geq 0}$ is the minimal parameter sequence of the positive chain sequence $\{d_{n+1}\}_{n \geq 1}$. Let $\mu$ be the  probability measure on the unit circle for which $\{\alpha_n\}_{n \geq 0}$ is the sequence of Verblunsky coefficients.   

If   
\[
   1 + \sum_{n = 2}^{\infty} \prod_{k=2}^{n} \frac{\ell_{k}}{1-\ell_{k} }  = \mathpzc{S} < \infty, 
\]
then $\mu$ is such that the integral $ \int_{\T} |\zeta-1|^{-2} d \mu(\zeta)$ exists and takes the value $\mathpzc{A}(\mu) = (c_1^2 + 1)\mathpzc{S}/4$.  In this case, let $\nu_{\underline{0}}$ be the probability measure on the unit circle given by  
\begin{equation}\label{Eq-nu0-measure}
  \nu_{\underline{0}}(e^{i\theta}) = \frac{1}{\mathpzc{A}(\mu)} \int_{0}^{\theta}\frac{1}{|e^{i\Theta}-1|^2}  d \mu(e^{i\Theta}),  
\end{equation}
and let the bounded  non-decreasing function $\varphi$ in $(-\infty, \infty)$ be such that $d \varphi(x)  = \linebreak - d \nu_{\underline{0}}\big((x+i)/(x-i)\big)$.   
Then, for $n \geq 1$, 
\begin{equation} \label{Eq-Orthogonality-for-Pn}
  \begin{array}{c}
     \dsp \int_{-\infty}^{\infty} x^{j} \, \frac{P_{n}(x)}{(x^2 +1)^{n}}\,d\varphi(x) = 0, \quad j = 0,1,2, \ldots, n-1.
  \end{array}
\end{equation}
Moreover,  if 
\[
     \gamma_{n}  =  \int_{-\infty}^{\infty}   \frac{x^{n}P_{n}(x)}{(x^2 +1)^{n}}\,d\varphi(x), \ \ \widehat{\gamma}_{n}=  \int_{-\infty}^{\infty} \frac{(x+i)P_{n}(x)}{(x^2+1)^{n+1}} d \varphi(x), 
\]
for $n \geq 0$,  then $\gamma_{0} = \int_{-\infty}^{\infty} d \varphi(x) =   \int_{\T}  d \nu_{\underline{0}}(\zeta)  = 1$, $1-M_{n} = \gamma_{n}/\gamma_{n-1}$, $n \geq 1$, 
\begin{equation} \label{Eq-Coeff-Rep}
    c_1 = \frac{\int_{-\infty}^{\infty}x(x^2+1)^{-1} d\varphi(x)}{\int_{-\infty}^{\infty}(x^2+1)^{-1} d\varphi(x)} \ \ \mbox{and} \ \   d_{n+1} = \frac{1}{\Im(\widehat{\gamma}_{n-1}/\widehat{\gamma}_{n})}, \ \  c_{n+1} = - \frac{\Re(\widehat{\gamma}_{n-1}/\widehat{\gamma}_{n})}{\Im(\widehat{\gamma}_{n-1}/\widehat{\gamma}_{n})}, 
\end{equation}
for $n \geq 1$. Here, $\{M_{n+1}\}_{n\geq 0}$ is the maximal parameter sequence of $\{d_{n+1}\}_{n \geq 1}$. 

\end{theorem}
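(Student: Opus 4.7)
The plan is to build on the explicit Möbius-type link $\zeta = (x+i)/(x-i)$ between the $R_{II}$-polynomials $\{P_n\}$ generated by \eqref{Eq-TTRR-Special-R2-type} and monic orthogonal polynomials on the unit circle $\{\Phi_n\}$, as developed in \cite{Ismail-Ranga}. First I would verify that \eqref{Eq-Verblunsky-Characterization-1} defines a valid Verblunsky sequence. Since $|\tau_n|=1$, computing the modulus gives
\[
   |\alpha_{n-1}|^2 \ = \ \frac{(1-2\ell_{n+1})^2 + c_{n+1}^2}{1+c_{n+1}^2},
\]
and because $0 \le \ell_{n+1} < 1$ forces $(1-2\ell_{n+1})^2 < 1$, we indeed have $|\alpha_{n-1}|<1$ for all $n$. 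The Favard theorem on the unit circle quoted earlier in the paper then supplies the unique probability measure $\mu$ with $\alpha_n(\mu) = \alpha_n$.

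Next I would handle the claim that $\int_{\T}|\zeta-1|^{-2}\,d\mu(\zeta) = (c_1^2+1)\mathpzc{S}/4$ under the summability hypothesis $\mathpzc{S}<\infty$. The hypothesis is exactly the statement that the minimal parameter sequence $\{\ell_{n+1}\}$ fails Wall's criterion and therefore is not the maximal parameter sequence of $\{d_{n+1}\}$. A Geronimus/Wall-type correspondence — in the precise form that relates the gap between the maximal and the minimal parameter sequences of the chain sequence associated with the Verblunsky coefficients via \eqref{Eq-Verblunsky-Characterization-1} to the integrability of $|\zeta-1|^{-2}$ against $\mu$ — yields existence of the integral. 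The particular multiplicative constant $(c_1^2+1)/4$ emerges by tracking the factor $1 - ic_1$ coming from $\alpha_0$ through this correspondence and using the identity $|\zeta-1|^2 = 4/(x^2+1)$ at $\zeta=(x+i)/(x-i)$.

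Third, I would define $\nu_{\underline{0}}$ as in \eqref{Eq-nu0-measure}; the division by $|\zeta-1|^2$ automatically removes any mass at $\zeta=1$, giving a bona fide probability measure with $\nu_{\underline{0}}(\{1\})=0$. Pulling back via $\zeta=(x+i)/(x-i)$ and using $-d\theta = \tfrac{2}{x^2+1}\,dx$ gives the stated $\varphi$ on $\mathbb{R}$. The orthogonality \eqref{Eq-Orthogonality-for-Pn} is then the translation of the fact that a suitable linear combination $\Phi_n^{\ast}(\zeta) - \overline{B_n}\Phi_n(\zeta)$ (with $|B_n|=1$) is a para-orthogonal polynomial on $\T$, and that under the Möbius change of variable this combination becomes $P_n(x)/(x-i)^n$ up to a unit-modulus factor; dividing by $(x^2+1)^n/|x-i|^{2n}=1$ and by $(x+i)^n$ converts circle-orthogonality against $\zeta^{-k},\ldots,\zeta^{k}$ for $k\le n-1$ into \eqref{Eq-Orthogonality-for-Pn} because $x^j$ corresponds to a Laurent polynomial of range $[-j,j]$ in $\zeta$.

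Finally, the moment formulas are a direct consequence of the recurrence. Multiplying \eqref{Eq-TTRR-Special-R2-type} by $(x+i)/(x^2+1)^{n+1}$ and integrating against $d\varphi$, and using \eqref{Eq-Orthogonality-for-Pn}, gives two linear equations that yield
\[
   \frac{\widehat{\gamma}_{n-1}}{\widehat{\gamma}_n} = -c_{n+1} + \frac{i}{d_{n+1}},
\]
whence the real/imaginary parts produce \eqref{Eq-Coeff-Rep}. The companion identity $1-M_n=\gamma_n/\gamma_{n-1}$ comes from \eqref{Eq-LeadCoeff-Pn} together with Wall's formula for the maximal parameter sequence, using that $\gamma_n$ is (up to sign) the leading coefficient of $P_n$ times the squared norm, so the ratio $\gamma_n/\gamma_{n-1}$ is exactly the chain-sequence parameter that saturates Wall's divergence condition; the formula for $c_1$ is the $n=1$ instance written out using $P_1(x)=x-c_1$. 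The main obstacle is the second step: pinning down the existence of $\int_{\T}|\zeta-1|^{-2}d\mu$ together with the precise value $(c_1^2+1)\mathpzc{S}/4$, because this demands a delicate inversion of the correspondence between the Verblunsky parameters $\alpha_n$ and the chain-sequence parameters $\ell_n$ encoded in \eqref{Eq-Verblunsky-Characterization-1}, and the normalizing constant is easy to miscompute.
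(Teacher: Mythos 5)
Your overall route coincides with the paper's: the existence and value of $\mathpzc{A}(\mu)$, the orthogonality \eqref{Eq-Orthogonality-for-Pn}, and the identity $1-M_n=\gamma_n/\gamma_{n-1}$ are all imported from the correspondence established in \cite{Ismail-Ranga} (the paper cites the proof of \cite[Thm.\,3.2]{Ismail-Ranga} together with Chihara's Theorem~6.1, which gives $1/M_1=\mathpzc{S}=4\mathpzc{A}(\mu)/(c_1^2+1)$), while the genuinely new content --- the formulas \eqref{Eq-Coeff-Rep} --- is obtained exactly as you propose, by multiplying \eqref{Eq-TTRR-Special-R2-type} by $(x+i)(x^2+1)^{-n-1}=(x-i)^{-1}(x^2+1)^{-n}$, integrating against $d\varphi$ and invoking \eqref{Eq-Orthogonality-for-Pn}. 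Your verification that $|\alpha_{n-1}|<1$ and the identity $|\zeta-1|^2=4/(x^2+1)$ are both correct.

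There are two concrete soft spots. First, your displayed ratio is wrong. Carrying out the computation, the degree-one factor $x+i$ kills the $P_{n+1}$ term, and writing $(x+i)(x-c_{n+1})=(x^2+1)-(c_{n+1}-i)(x+i)$ gives $(c_{n+1}-i)\,\widehat{\gamma}_{n}=-d_{n+1}\widehat{\gamma}_{n-1}$, hence
\[
\frac{\widehat{\gamma}_{n-1}}{\widehat{\gamma}_{n}}=\frac{-c_{n+1}+i}{d_{n+1}},
\]
whose real part is $-c_{n+1}/d_{n+1}$, not $-c_{n+1}$. As written, your formula $-c_{n+1}+i/d_{n+1}$ is inconsistent with \eqref{Eq-Coeff-Rep}: it would yield $-\Re(\widehat{\gamma}_{n-1}/\widehat{\gamma}_{n})/\Im(\widehat{\gamma}_{n-1}/\widehat{\gamma}_{n})=c_{n+1}d_{n+1}$ rather than $c_{n+1}$. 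Second, your justification of $1-M_{n}=\gamma_{n}/\gamma_{n-1}$ (``$\gamma_n$ is the leading coefficient times the squared norm, so the ratio saturates Wall's condition'') is a heuristic, not an argument; this identity requires the explicit relation between the $\gamma_n$ and the maximal parameter sequence from \cite{Ismail-Ranga}, and Wall's criterion alone does not produce it. The same caveat applies to the ``Geronimus/Wall-type correspondence'' you invoke for $\mathpzc{A}(\mu)=(c_1^2+1)\mathpzc{S}/4$: you correctly flag this as the delicate step, but you do not supply it, whereas the paper explicitly delegates it to \cite[Thm.\,3.2]{Ismail-Ranga}. If the intended ground rules allow citing that reference, your sketch is acceptable modulo the algebra slip; if not, the second step remains a genuine gap.
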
  

\begin{proof}  Most of the results in this theorem of preliminary results follow from  \cite{Ismail-Ranga} and references therein. The value $\mathpzc{S}$ of the infinite series in Theorem \ref{Thm-Basics} is finite also equivalent of saying, by Wall's criteria,  that the positive chain sequence $\{d_{n+1}\}_{n\geq 1}$ in \eqref{Eq-TTRR-Special-R2-type} has multiple parameter sequences.  In this case, the affirmation  $\mathpzc{A}(\mu) = \int_{\T} \frac{1}{|\zeta-1|^2} d \mu(\zeta) = (c_1^2 + 1)\mathpzc{S}/4$ follows from results given within the proof of  \cite[Thm.\, 3.2]{Ismail-Ranga}. Moreover, we can also identify (see Theorem 6.1 in \cite[p.101]{Chihara-Book}) that  
\begin{equation*} \label{Eq-MaxPara-to-S}
    \frac{1}{M_1} = \mathpzc{S} = \frac{4\mathpzc{A}(\mu)}{c_1^2 + 1},
\end{equation*} 
where $M_{1}$ is the initial element of the maximal parameter sequence of the positive chain sequence $\{d_{n+1}\}_{n\geq 1}$. 

The formulas for  $\{c_{n}\}$ and $\{d_{n+1}\}$ given by \eqref{Eq-Coeff-Rep}, which can be considered as further new results,  are obtained from the real and imaginary parts of 
\[
      (c_{n+1}-i) \int_{-\infty}^{\infty} (x+i) \frac{P_{n}(x)}{(x^2+1)^{n+1}} d\varphi(x) = - d_{n+1} \int_{-\infty}^{\infty} (x+i) \frac{P_{n-1}(x)}{(x^2+1)^{n}} d\varphi(x), \quad  n \ge 1,
\]
The above result follows from multiplication of \eqref{Eq-TTRR-Special-R2-type} by $ (x-i)^{-1}(x^2+1)^{-n}$ and then use of the orthogonality given by \eqref{Eq-Orthogonality-for-Pn}. 
\end{proof}

Observe that the formulas for $\{c_{n}\}$ and $\{d_{n+1}\}$ given by \eqref{Eq-Coeff-Rep}, like the Stieltjes procedure for generating orthogonal polynomials on the real line (see \cite{Gautschi-SiamJSSC82}), provide a recursive method for the numerical construction of the polynomials $P_{n}$ starting from $\varphi$.

\begin{remark} 
 Clearly, it is important that given a probability measure $\mu$ such that the integral  $ \int_{\T} \frac{1}{|\zeta-1|^2} d \mu(\zeta)$ exists then how one could  get from  $\{\alpha_{n}(\mu)\}_{n \geq 0}$ the sequences $\{c_{n}\}_{n \geq 1}$ and $\{d_{n+1}\}_{n \geq 1}$ which, by \eqref{Eq-Verblunsky-Characterization-1}, lead back to  $\{\alpha_{n}(\mu)\}_{n \geq 0}$.  In other words, what is the inverse map of \eqref{Eq-Verblunsky-Characterization-1}? This we give in Theorem \ref{Thm-Inv-Map-1}. 

\end{remark}

If we know the maximal parameter sequence of the positive chain sequence $\{d_{n+1}\}_{n \geq 1}$ in Theorem \ref{Thm-Basics}, then, as in \cite[Thm.~1.2]{Ismail-Ranga}, the Verblunsky coefficients associated with the measure $\nu_{\underline{0}}$ in Theorem \ref{Thm-Basics} are 
\begin{equation} \label{Eq-Verblunsky-Characterization-22}
      \alpha_{n-1}(\nu_{\underline{0}}) = \frac{1}{\tau_{n-1}}\frac{1-2M_n-ic_n}{1-ic_n}, \quad n\geq 1.
\end{equation}
With the probability measure $\nu_{\underline{0}}$  we can generate a family of probability measures $\nu_{\underline{\epsilon}}$, $0 \leq \epsilon < 1$, by the Uvarov transformation 
\begin{equation} \label{Eq-nu-Class}
      \int_{\T} \phi(\zeta)\,d\nu_{\underline{\epsilon}}(\zeta) = (1-\epsilon)\int_{\T} \phi(\zeta)\,d\nu_{\underline{0}}(\zeta) + \epsilon \phi(1).
\end{equation}
Clearly, with  the quantity $\mathpzc{B}(\nu)$ associated with any $\nu$, defined by   
\[ 
     \mathpzc{B}(\nu) = \int_{\T} |\zeta-1|^2 d \nu(\zeta), 
\]
one can easily see that there hold 
\begin{equation*} \label{Eq-nu-to-mu-temp}
     \mu(e^{i\theta}) = \frac{1}{\mathpzc{B}(\nu_{\underline{\epsilon}})}\int_{0}^{\theta} |e^{i\Theta}-1|^2\, d\nu_{\underline{\epsilon}}(e^{i\Theta}) = \frac{1}{\mathpzc{B}(\nu_{\underline{0}})}\int_{0}^{\theta} |e^{i\Theta}-1|^2\, d\nu_{\underline{0}} (e^{i\Theta}), \quad 0 < \epsilon < 1.  
\end{equation*}

\begin{remark}
 Again, it is important that, given a probability measure $\nu = \nu_{\underline{\epsilon}}$, $0 \leq \epsilon <1$,  as above then how one can get directly from the Verblunsky coefficients $\{\alpha_{n}(\nu)\}_{n \geq 0}$ the sequences $\{c_{n}\}_{n \geq 1}$ and $\{d_{n+1}\}_{n \geq 1}$ which by \eqref{Eq-Verblunsky-Characterization-1} lead to  $\{\alpha_{n}(\mu)\}_{n \geq 0}$. That is, what is the inverse map of \eqref{Eq-Verblunsky-Characterization-22}? This we give in Theorem \ref{Thm-Inv-Map-2}.

\end{remark}

The main results of the present paper are the following theorems.

\begin{theorem} \label{Thm-QuadRule-RealLine} Let $\varphi$ be given as in  Theorem \ref{Thm-Basics}.   Let $x_{n,k}$, $k=1,2, \ldots, n$, be the zeros of the polynomial $P_n$ (in decreasing order) and let the numbers $\omega_{n,k}$ be such that 
\begin{equation}\label{Eq1-omega-nk}  
  \omega_{n,k} = \frac{(x_{n,k}^2+1)^{n-1}d_2d_3\cdots d_n M_1}{P'_n(x_{n,k})P_{n-1}(x_{n,k})} = \frac{M_1}{ \mathbf{u}_{n}(x_{n,k})^{H} \mathbf{B}_{n} \mathbf{u}_{n}(x_{n,k})}, \quad k = 1,2, \ldots, n. 
\end{equation} 
Then $\omega_{n,k}$ are all positive and for any $f$ such that $(x^2+1)^nf(x)\in \mathbb{P}_{2n-1}$ there holds the  quadrature rule 
\begin{equation}  \label{Eq0-QR-RL}
\int_{-\infty}^{+\infty}f(x)\,d\varphi(x) = \sum_{k=1}^{n}\omega_{n,k}\,f(x_{n,k}). 
\end{equation} 

\end{theorem}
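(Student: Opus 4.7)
My plan is to follow the classical Gauss quadrature argument, adapted to the $R_{II}$ orthogonality \eqref{Eq-Orthogonality-for-Pn}, in which the weight $(x^2+1)^{-n}\,d\varphi(x)$ itself depends on $n$.

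First I would reduce the claim to the identification of the weights. Setting $\pi(x) := (x^2+1)^n f(x) \in \mathbb{P}_{2n-1}$ and performing Euclidean division by $P_n$ yields $\pi = P_n q + r$ with $q, r \in \mathbb{P}_{n-1}$. The orthogonality \eqref{Eq-Orthogonality-for-Pn} annihilates the $P_n q$ contribution to the integral, so
\[
\int_{-\infty}^{\infty} f(x)\, d\varphi(x) \;=\; \int_{-\infty}^{\infty} \frac{r(x)}{(x^2+1)^n}\, d\varphi(x).
\]
Because $P_n(x_{n,k}) = 0$ and the zeros are simple by \eqref{Eq-zero-arrangement}, one has $f(x_{n,k}) = r(x_{n,k})/(x_{n,k}^2+1)^n$, and Lagrange interpolation at the nodes $x_{n,1},\ldots,x_{n,n}$ gives $r(x) = \sum_{k} r(x_{n,k})\,P_n(x)/[(x-x_{n,k})P_n'(x_{n,k})]$. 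The theorem then reduces to verifying
\[
\int_{-\infty}^{\infty} \frac{P_n(x)}{(x-x_{n,k})(x^2+1)^n}\, d\varphi(x) \;=\; \frac{d_2 d_3\cdots d_n \, M_1}{(x_{n,k}^2+1)\, P_{n-1}(x_{n,k})}, \qquad k=1,\ldots,n.
\]

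To establish this identity I would bring in the companion sequence $\{Q_n\}$ satisfying the same recurrence \eqref{Eq-TTRR-Special-R2-type} with $Q_0 = 0$ and a constant $Q_1$ chosen so that the factor $M_1$ emerges naturally at the end (the natural candidate being $Q_1 = M_1$, guided by $\gamma_0 = 1$ and $\gamma_n/\gamma_{n-1} = 1 - M_n$ from Theorem \ref{Thm-Basics}). A telescoping computation based on \eqref{Eq-TTRR-Special-R2-type} should yield an $R_{II}$-analog of the Christoffel–Darboux identity, expressing $[P_n(x)Q_{n-1}(y) - P_{n-1}(x)Q_n(y)]/(x-y)$ as a weighted sum of products $P_j(x)P_j(y)$ with denominators involving powers of $(x^2+1)$ and $(y^2+1)$. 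Specializing $y = x_{n,k}$, integrating against $(x^2+1)^{-n}\,d\varphi$, and using the orthogonality of $P_n$ ought to collapse the right-hand side, leaving the claimed closed form. I expect this step — correctly normalizing $Q_1$ and bookkeeping the $(x^2+1)$ factors that are extraneous to the standard Christoffel–Darboux telescoping — to be the principal technical obstacle; an alternative, perhaps cleaner, route would be to recognize $Q_n/P_n$ as a Padé-type approximant to the Stieltjes transform $\int (x-z)^{-1}(x^2+1)^{-n}\,d\varphi(x)$ and read off the residues at $z = x_{n,k}$.

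The equality of the two representations of $\omega_{n,k}$ in \eqref{Eq1-omega-nk} is then immediate from \eqref{Eq-PosDef-formula} by setting $x = x_{n,k}$. Positivity follows directly from the second form: $\mathbf{B}_n$ is positive definite precisely because $\{d_{n+1}\}_{n\geq 1}$ is a positive chain sequence, the eigenvector $\mathbf{u}_n(x_{n,k})$ is nonzero since its first component $u_{n,0}(x_{n,k}) = P_0(x_{n,k}) = 1$, and $M_1 > 0$ because it is the initial element of the maximal parameter sequence of a positive chain sequence.
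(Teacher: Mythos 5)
Your reduction of the theorem to the single identity
\[
\int_{-\infty}^{\infty} \frac{P_n(x)}{(x-x_{n,k})(x^2+1)^n}\, d\varphi(x) \;=\; \frac{d_2\cdots d_n\, M_1}{(x_{n,k}^2+1)\,P_{n-1}(x_{n,k})}
\]
is exactly the paper's first step (Euclidean division of $(x^2+1)^n f$ by $P_n$, Lagrange interpolation of the remainder at the nodes, and the orthogonality \eqref{Eq-Orthogonality-for-Pn} to annihilate the quotient term), and your positivity argument via \eqref{Eq-PosDef-formula} is sound, though the paper instead takes $f(x)=(x^2+1)^{-n}\bigl[P_n(x)/\bigl((x-x_{n,j})P_n'(x_{n,j})\bigr)\bigr]^2$ and reads positivity off the integral of a square, which does not require the closed form first. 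The genuine gap is in the middle: the displayed identity is only planned, and the primary device you propose for it cannot work as stated. The mixed kernel $[P_n(x)Q_{n-1}(y)-P_{n-1}(x)Q_n(y)]/(x-y)$ cannot equal a finite sum of products $P_j(x)P_j(y)$ over powers of $(x^2+1)$ and $(y^2+1)$, because its numerator at $y=x$ equals $-\bigl(Q_nP_{n-1}-Q_{n-1}P_n\bigr)(x)$, which is \emph{not} zero --- indeed the whole point is that this Casorati determinant equals $M_1 d_2\cdots d_n(x^2+1)^{n-1}$ --- so the quotient has a pole along the diagonal. A Christoffel--Darboux-type identity requires both rows of the determinant to come from the same sequence.

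What actually closes the argument, and is what the paper does, is your parenthetical ``Pad\'e'' alternative made precise by two lemmas that you have left open. First, one defines $Q_n(x)=\int_{-\infty}^{\infty}\bigl[(x^2+1)^nP_n(t)-(t^2+1)^nP_n(x)\bigr]/\bigl[(t-x)(t^2+1)^n\bigr]\,d\varphi(t)$ and proves (the paper's Proposition 3.1) that this is a polynomial of degree $n-1$ obeying the recurrence \eqref{Eq-TTRR-Special-R2-type} with $Q_0=0$ and $Q_1=M_1$; this is not automatic, since the integrand is a priori of degree $2n-1$ in $x$, and both the identification $Q_n^{(r)}(x)=x^rQ_n^{(0)}(x)$ via orthogonality and the evaluation $Q_1=\gamma_0-(1-M_1)\gamma_0=M_1$ are needed. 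With this in hand, the interpolatory expression for the weights gives $\omega_{n,k}=Q_n(x_{n,k})/P_n'(x_{n,k})$, i.e.\ the residues of $Q_n/P_n$ as you anticipated. Second, a one-line telescoping of the two recurrences in a single variable yields $Q_nP_{n-1}-Q_{n-1}P_n=M_1d_2\cdots d_n(x^2+1)^{n-1}$; evaluating at $x_{n,k}$, where $P_n$ vanishes, gives $Q_n(x_{n,k})=M_1d_2\cdots d_n(x_{n,k}^2+1)^{n-1}/P_{n-1}(x_{n,k})$ and hence the first form in \eqref{Eq1-omega-nk}. Until these two steps are supplied, the closed form of the weights --- the heart of the theorem --- is unproved.
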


The proof of  Theorem \ref{Thm-QuadRule-RealLine} is given in Section \ref{Sec-QuadRules-RL}. In this section we also look at alternative ways of representing the weights $\omega_{n,k}$. Polynomials given by 
\begin{equation*} \label{Eq-Qn0-Polynomials}
   Q_{n}(x)=\int_{-\infty}^{+\infty}\frac{(x^2+1)^{n} P_n(t)-(t^2+1)^{n} P_n(x)}{(t-x)}\frac{1}{(t^2+1)^n}d\varphi(t),  \quad n \geq 1,
\end{equation*} 
also play an important role in this section.

\begin{theorem}  \label{Thm-QuadRule-UnitCircle}

Let $\mu$ be the probability measures on the unit circle given by Theorem \ref{Thm-Basics}, obtained under the condition  $\mathpzc{S} < \infty$, and let $x_{n,k}$ and $\omega_{n,k}$, $k = 1,2, , \ldots,n$ be as in Theorem \ref{Thm-QuadRule-RealLine}.   
Then for any $\mathcal{F}(z) \in span\{z^{-n+1},z^{-n+2},\ldots,z^{n-2},z^{n-1}\}$, there holds the $n$-point quadrature rule 
\begin{equation} \label{Eq-QR-UC-mu}
      \int_{\T} \mathcal{F}(\zeta)\, d \mu(\zeta) = \sum_{k=1}^{n} \lambda_{n,k} \, \mathcal{F}(\xi_{n,k}),
\end{equation}
where 
\[
      \xi_{n,k} = \frac{x_{n,k}+i}{x_{n,k}-i} \quad \mbox{and} \quad %
\lambda_{n,k} = \frac{c_1^2 +1}{M_1} \, \frac{\omega_{n,k}}{x_{n,k}^2 + 1} = \mathpzc{A}(\mu) |\xi_{n,k} -1|^2\omega_{n,k}, 
\]
for $ k =1, 2, \ldots, n$. 

For any $\epsilon$ such that $0 \leq \epsilon < 1$, if $\nu_{\underline{\epsilon}}$ is the probability measure given by Theorem \ref{Thm-Basics} and \eqref{Eq-nu-Class}, then for any $\mathcal{F}(z) \in span\{z^{-n},z^{-n+1},\ldots,z^{n-1},z^{n}\}$, there holds the $(n+1)$-point quadrature rule 
\begin{equation} \label{Eq-QR-UC-nu}
      \int_{\T} \mathcal{F}(\zeta)\, d \nu_{\underline{\epsilon}}(\zeta) = [(1-\epsilon) \widehat{\lambda}_{n+1,n+1} + \epsilon]\, \mathcal{F}(1) + \sum_{k=1}^{n} (1-\epsilon)\omega_{n,k} \, \mathcal{F}(\xi_{n,k}),
\end{equation}
where 
\[
       \widehat{\lambda}_{n+1,n+1} = \frac{(1-M_1)(1-M_2) \cdots (1-M_n)}{(1-\ell_1)(1-\ell_2)\cdots(1-\ell_n)}. 
\]
\end{theorem}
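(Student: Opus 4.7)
The unifying device will be the Cayley map $\zeta = (x+i)/(x-i)$, which by Theorem \ref{Thm-Basics} identifies $d\varphi$ with the pullback of $d\nu_{\underline{0}}$ from $\T \setminus \{1\}$ to $\mathbb{R}$, satisfies $|\zeta-1|^2 = 4/(x^2+1)$, and, via the relation $d\mu = \mathpzc{A}(\mu)|\zeta-1|^2 d\nu_{\underline{0}}$ read off from \eqref{Eq-nu0-measure}, also converts $d\mu$ into $4\mathpzc{A}(\mu)(x^2+1)^{-1} d\varphi$. For each of the two quadrature rules the plan is the same: pull the Laurent test function $\mathcal{F}$ back to a function $f$ on $\mathbb{R}$, examine the polynomial $(x^2+1)^n f(x)$, and feed the outcome into the real-line rule of Theorem \ref{Thm-QuadRule-RealLine}.

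For \eqref{Eq-QR-UC-mu}, I would set $f(x) := 4\mathpzc{A}(\mu)(x^2+1)^{-1}\mathcal{F}((x+i)/(x-i))$ and observe that for $\mathcal{F} = \sum_{j=-n+1}^{n-1} a_j \zeta^j$ the product $(x^2+1)^n f(x) = 4\mathpzc{A}(\mu)\sum_j a_j (x+i)^{n-1+j}(x-i)^{n-1-j}$ sits inside $\mathbb{P}_{2n-2}$, so Theorem \ref{Thm-QuadRule-RealLine} applies. The left-hand side then becomes $\int_\T \mathcal{F}\,d\mu$ and the right-hand side collapses to $\sum \lambda_{n,k}\mathcal{F}(\xi_{n,k})$ with $\lambda_{n,k} = 4\mathpzc{A}(\mu)\omega_{n,k}/(x_{n,k}^2+1)$. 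Using $4\mathpzc{A}(\mu)=(c_1^2+1)/M_1$ (from Theorem \ref{Thm-Basics}) together with $|\xi_{n,k}-1|^2 = 4/(x_{n,k}^2+1)$ recovers both equivalent forms of $\lambda_{n,k}$ stated in the theorem.

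For \eqref{Eq-QR-UC-nu}, by the linearity of the Uvarov construction \eqref{Eq-nu-Class} it is enough to treat $\epsilon = 0$. The same substitution now produces $q(x) := (x^2+1)^n \mathcal{F}((x+i)/(x-i)) = \sum_{j=-n}^{n} a_j (x+i)^{n+j}(x-i)^{n-j}$ of degree exactly $2n$ and leading coefficient $\sum_j a_j = \mathcal{F}(1)$, which falls just outside the reach of Theorem \ref{Thm-QuadRule-RealLine}. The key step is to subtract not $(x^2+1)^n$ but rather $P_n^2$: write
\[
  q(x) = \frac{\mathcal{F}(1)}{\mathfrak{p}_n^2}\,P_n(x)^2 + r(x), \qquad \deg r \leq 2n-1,
\]
so that integration of $q/(x^2+1)^n$ against $d\varphi$ splits into two pieces. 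The orthogonality \eqref{Eq-Orthogonality-for-Pn} applied after expanding $P_n^2 = P_n\cdot(\mathfrak{p}_n x^n + \text{lower})$ collapses the first piece to $\mathfrak{p}_n\gamma_n$, while Theorem \ref{Thm-QuadRule-RealLine} handles $r/(x^2+1)^n$ exactly and, since $P_n(x_{n,k})=0$, its nodal values coincide with $\mathcal{F}(\xi_{n,k})$. Assembling everything yields $\int \mathcal{F}\,d\nu_{\underline{0}} = \mathcal{F}(1)\gamma_n/\mathfrak{p}_n + \sum \omega_{n,k}\mathcal{F}(\xi_{n,k})$, and the identities $\gamma_n = \prod_{k=1}^n(1-M_k)$ and $\mathfrak{p}_n = \prod_{k=1}^n(1-\ell_k)$ from Theorem \ref{Thm-Basics} and \eqref{Eq-LeadCoeff-Pn} turn the point-mass weight into the asserted $\widehat{\lambda}_{n+1,n+1}$.

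The main obstacle is precisely this extra degree of freedom in the $\nu_{\underline{\epsilon}}$-case: the naive $n$-point real-line rule falls one degree short, and the argument hinges on the choice of $P_n^2$ (rather than $(x^2+1)^n$ or some other ad hoc degree-$2n$ polynomial) as what to peel off, because only then does the orthogonality of $P_n$ both annihilate the top-degree contribution and produce, via $\gamma_n/\mathfrak{p}_n$, the explicit Radau-type weight at $\zeta=1$.
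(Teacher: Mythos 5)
Your proposal is correct, and for the $(n+1)$-point rule it takes a genuinely different route from the paper. The paper starts from the Jones--Nj\aa stad--Thron framework: it observes that $\xi_{n,k}$ (resp.\ $\{1\}\cup\{\xi_{n,k}\}$) are the zeros of the para-orthogonal polynomials $\Psi_{n}(\mu;-\tau_n,z)$ (resp.\ $\Psi_{n+1}(\nu_{\underline{0}};\tau_n,z)$), invokes the known exactness of the associated interpolatory rules on the full Laurent spans, and then identifies the weights by comparing, on the basis $(z+1)^r(z-1)^{2n-2-r}z^{-n+1}$ (resp.\ $(z+1)^r(z-1)^{2n-r}z^{-n}$), with the Cayley-transported real-line rule \eqref{Eq-QR-with-RealCanonicBasis}; the mass at $\zeta=1$ is then computed from the interpolatory weight formula \eqref{Eq-Lambda-Interpol-Form} together with \eqref{Eq-nu-Psi-R}, yielding $\widehat{\lambda}_{n+1,n+1}=R_n(1)^{-1}\int_{\T}R_n\,d\nu_{\underline{0}}$. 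You instead derive everything directly from Theorem \ref{Thm-QuadRule-RealLine}: for \eqref{Eq-QR-UC-mu} the pullback lands in $\mathbb{P}_{2n-2}$ and nothing more is needed, while for \eqref{Eq-QR-UC-nu} you handle the one extra degree by peeling off $\frac{\mathcal{F}(1)}{\mathfrak{p}_n^2}P_n^2$, whose integral against $(x^2+1)^{-n}d\varphi$ collapses by \eqref{Eq-Orthogonality-for-Pn} to $\mathcal{F}(1)\gamma_n/\mathfrak{p}_n$, and the identities $\gamma_n=\prod_{k=1}^n(1-M_k)$, $\mathfrak{p}_n=\prod_{k=1}^n(1-\ell_k)$ give $\widehat{\lambda}_{n+1,n+1}$. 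Your argument is more elementary and self-contained (it does not presuppose the JNT exactness theorem, and in effect reproves it for these nodes); what the paper's route buys is the explicit identification of these rules as para-orthogonal quadrature rules, which is exploited later in Sections \ref{Sec-QuadRules-UC} and \ref{Sec-NumEval-NodesWeights}. Two small points worth making explicit in your write-up: the pullback identity $\int_{\T}\mathcal{F}\,d\mu=\int_{-\infty}^{\infty}f\,d\varphi$ uses that neither $\mu$ nor $\nu_{\underline{0}}$ carries mass at $\zeta=1$ (guaranteed by $\mathpzc{A}(\mu)<\infty$ and by the definition of $\nu_{\underline{0}}$), and the normalization $4\mathpzc{A}(\mu)=(c_1^2+1)/M_1$ comes from the displayed relation $M_1^{-1}=\mathpzc{S}=4\mathpzc{A}(\mu)/(c_1^2+1)$ in the proof of Theorem \ref{Thm-Basics}.
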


  The proof of this theorem is in Section \ref{Sec-QuadRules-UC}. 

\begin{remark} 
{ 
The quadrature rules given by \eqref{Eq-QR-UC-mu}  and \eqref{Eq-QR-UC-nu} are  particular cases of the quadrature rules based on para-orthogonal polynomials introduced by Jones, Nj\aa stad and Thron \cite{JONES}. Precisely which para-orthogonal polynomials lead to these quadrature rules  and, consequently, also what is the inverse maps from  $\{\alpha_n\}_{n \geq 0}$ to   $(\{c_{n}\}_{n \geq 1}, \{\ell_{n+1}\}_{n \geq 1})$, are clarified in Section \ref{Sec-QuadRules-UC}. 
 }
\end{remark}

\setcounter{equation}{0}
\section{Quadrature rules on the real line } 
\label{Sec-QuadRules-RL}

In order to derive the quadrature rule given by  Theorem \ref{Thm-QuadRule-RealLine}, let the rational function $f$ be such that  $(x^2+1)^n f(x) \in \PP_{2n-1}$. Hence, one can write 
\[
    (x^2+1)^n f(x) = q(x)\, P_{n}(x)  + r(x),
\]
with $q(x) \in \PP_{n-1}$, $r(x) \in \PP_{n-1}$ and $r(x_{n,k})= (x_{n,k}^2+1)^{n}f(x_{n,k})$, $k=1,2, \ldots, n$. Here,  $x_{n,k}$, $k=1,2, \ldots, n$, are the $n$ real and simple zeros of $P_{n}$. Writing $r(x)$ in terms of its  interpolatory polynomial at the zeros of $P_{n}$ gives 
\[
    (x^2+1)^n f(x) = q(x) \,P_{n}(x) + \sum_{k=1}^{n} \frac{P_n(x)}{(x-x_{n,k}) P_{n}^{\prime}(x_{n,k})}(x_{n,k}^2+1)^{n}f(x_{n,k}). 
\]
 Thus, from the orthogonal property of $P_{n}$ in Theorem \ref{Thm-Basics}, 
\begin{equation}  \label{Eq1-QR-RL}
    \int_{-\infty}^{\infty} f(x) d \varphi(x) = \sum_{k=1}^{n} \omega_{n,k} \,f(x_{n,k}),
\end{equation}
if one assumes 
\begin{equation} \label{Eq2-omega-nk}
        \omega_{n,k}  = \int_{-\infty}^{+\infty}\frac{P_n(x)}{(x-x_{n,k})P_{n}^{\prime}(x_{n,k})}  
       \frac{(x^{2}_{n,k}+1)^n}{(x^2+1)^n}d\varphi(x), \quad k = 1,2, \ldots, n. 
\end{equation}
This is exactly the quadrature rule given by Theorem \ref{Thm-QuadRule-RealLine}. In order to  confirm this we now look for other representations for the quantities  $\omega_{n,k}$  including the representations given within Theorem \ref{Thm-QuadRule-RealLine}. 

For  $f(x) = (x^2+1)^{-n} \Big[\frac{P_{n}(x)}{(x-x_{n,j})P_{n}^{\prime}(x_{n,j})} \Big]^2 $ there holds $(x^2+1)^n f(x)  \in \PP_{2n-2}$.  Hence, from  \eqref{Eq1-QR-RL},    
\begin{equation*} \label{Eq3-omega-nk}
        \omega_{n,j}  = \int_{-\infty}^{+\infty}\Big[\frac{P_{n}(x)}{(x-x_{n,j})P_{n}^{\prime}(x_{n,j})} \Big]^2  
       \frac{(x_{n,j}^2+1)^n}{(x^2+1)^n}d\varphi(x), \quad j = 1,2, \ldots, n. 
\end{equation*}
This shows that the numbers $\omega_{n,j}$ are all positive. 

Now to obtain the expression for $\omega_{n,k}$ given as in Theorem \ref{Thm-QuadRule-RealLine}, let us  consider the polynomials  $Q_{n}^{(r)}$  defined by
\begin{equation} \label{Eq-Qnr-Polynomials}
   Q_{n}^{(r)}(x)=\int_{-\infty}^{+\infty}\frac{(x^2+1)^{n} t^{r}P_n(t)-(t^2+1)^{n}x^r P_n(x)}{(t-x)(t^2+1)^n} d\varphi(t),  
\end{equation} 
for $r =0, 1, \ldots, n$ and for $n \geq 1$. At a first glance all one can say is that  $Q_{n}^{(r)}$ is  a polynomial of degree $2n-1$ or less. It turns out  $Q_{n}^{(r)}$ is a polynomial  of exact degree $n+r-1$. To be more precise,  we can state the following theorem. 

\begin{proposition} \label{Thm-Qnr-Properties} The polynomial $Q_{n}^{(r)}$, defined as in  \eqref{Eq-Qnr-Polynomials}, is exactly of degree $n +r-1$ and satisfy 
\[
         Q_{n}^{(r)}(x) = x^r Q_{n}^{(0)}(x), \quad r=0,1, \ldots, n, \quad n \geq 1. 
\] 
Moreover, the sequence of polynomials  $\{Q_{n}^{(0)}\}_{n \geq 0} = \{Q_{n}\}_{n \geq 0}$ satisfy
\begin{equation} \label{Eq-TTRR-Special-R2-type-for-Qn}
         Q_{n+1}(x)  = (x-c_{n+1}) Q_{n}(x)  - d_{n+1} (x^2+1) Q_{n-1}(x), \quad n \geq 1,
\end{equation}  
with $Q_{0}(x) = 0$ and $Q_{1}(x) = M_1$. 
\end{proposition}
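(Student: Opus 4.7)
The plan is to split the statement into three claims: the reduction $Q_n^{(r)} = x^r Q_n^{(0)}$, the three-term recurrence for $Q_n^{(0)}$, and the initial values together with the exact degree.

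For the reduction, I would split the integrand numerator algebraically as
\[
(x^2+1)^n t^r P_n(t) - (t^2+1)^n x^r P_n(x) = x^r\bigl[(x^2+1)^n P_n(t) - (t^2+1)^n P_n(x)\bigr] + (x^2+1)^n(t^r - x^r)P_n(t).
\]
After division by $(t-x)(t^2+1)^n$ and integration, the first piece produces $x^r Q_n^{(0)}(x)$; in the second piece $(t^r - x^r)/(t-x) = \sum_{j=0}^{r-1} x^{r-1-j} t^j$ is a polynomial in $t$ of degree $r-1\le n-1$, so the residual is a linear combination (with $x$-polynomial coefficients) of integrals $\int t^j P_n(t)/(t^2+1)^n\, d\varphi(t)$ for $0 \le j \le n-1$, each of which vanishes by the orthogonality \eqref{Eq-Orthogonality-for-Pn} of Theorem \ref{Thm-Basics}.

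For the recurrence, let $F_k(t,x)$ denote the integrand defining $Q_k^{(0)}$. Using \eqref{Eq-TTRR-Special-R2-type} on $P_{n+1}(t)$ and on $P_{n+1}(x)$, and then repeatedly applying the exponent-lowering identity $(x^2+1)^{m}P(t) - (t^2+1)^{m}P(x) = (x^2+1)[(x^2+1)^{m-1} P(t) - (t^2+1)^{m-1}P(x)] + (x^2-t^2)(t^2+1)^{m-1}P(x)$, the combination $F_{n+1}(t,x) - (x-c_{n+1})F_n(t,x) + d_{n+1}(x^2+1)F_{n-1}(t,x)$ rearranges into a residual of the form
\[
\frac{d_{n+1}(x^2+1)^n(x+t)P_{n-1}(t)}{(t^2+1)^n} + \frac{(x^2+1)^n P_n(t)\bigl[1 + c_{n+1}x + t(c_{n+1}-x)\bigr]}{(t^2+1)^{n+1}}.
\]
The main obstacle is that the $P_{n-1}(x)$ correction terms coming from the $d_{n+1}$-branch and from the $F_n$-branch must cancel exactly; careful bookkeeping confirms they do. Integrating the surviving residual against $d\varphi(t)$ produces a polynomial of degree at most $1$ in $x$ whose coefficients are real-linear in $\Re(\widehat{\gamma}_{n-1}), \Im(\widehat{\gamma}_{n-1}), \Re(\widehat{\gamma}_n), \Im(\widehat{\gamma}_n)$. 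The defining formulas \eqref{Eq-Coeff-Rep} for $c_{n+1}$ and $d_{n+1}$ encode the single complex identity $(c_{n+1}-i)\widehat{\gamma}_n = -d_{n+1}\widehat{\gamma}_{n-1}$, whose real and imaginary parts make both coefficients of $x^0$ and $x^1$ vanish, thereby establishing the recurrence.

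For the initial values, $Q_0^{(0)} = 0$ is immediate. Plugging $P_1(t)=t-c_1$ into $Q_1^{(0)}$ and simplifying gives $Q_1^{(0)}(x) = (1 + c_1 x)\Im(\widehat{\gamma}_0) + (c_1 - x)\Re(\widehat{\gamma}_0)$; the $x$-coefficient vanishes by the formula for $c_1$ in \eqref{Eq-Coeff-Rep}, and the constant term reduces to $(c_1^2+1)\Im(\widehat{\gamma}_0) = M_1$ after identifying $\int (t^2+1)^{-1}d\varphi(t) = 1/[4\mathpzc{A}(\mu)]$ via the change of variables $\zeta = (t+i)/(t-i)$. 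For the exact degree, the reduction $Q_n^{(r)}=x^r Q_n^{(0)}$ applied with $r=n$ shows that $x^n Q_n^{(0)}(x)$ is a polynomial in $x$ of degree at most $2n-1$ (the integrand numerator vanishes at $x=t$), hence $\deg Q_n^{(0)} \le n-1$. Computing the leading $x^{n-1}$ coefficient gives $\mathfrak{p}_n - \gamma_n = \prod_{k=1}^n(1-\ell_k) - \prod_{k=1}^n(1-M_k)$, which is strictly positive because $\ell_1 = 0 < M_1$ while $\ell_k \le M_k$ for all $k$; this delivers the exact degree $n-1$ for $Q_n^{(0)}$ and hence $n+r-1$ for $Q_n^{(r)}$.
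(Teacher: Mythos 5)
Your proof is correct, and your first step (the reduction $Q_{n}^{(r)}(x)=x^{r}Q_{n}^{(0)}(x)$ via splitting the numerator and invoking the orthogonality \eqref{Eq-Orthogonality-for-Pn}) is exactly the paper's. For the recurrence, however, you take a genuinely different route. The paper never computes a residual: after inserting \eqref{Eq-TTRR-Special-R2-type} into the integrand it applies the identity $Q_{n}^{(r)}=x^{r}Q_{n}^{(0)}$ three times --- to convert the $(t-c_{n+1})$ factor into $(x-c_{n+1})Q_{n}(x)$, to match the exponent in the $P_{n-1}$ term, and (writing $(t^{2}+1)P_{n+1}(t)$ in the numerator) to recognize the left-hand side as $Q_{n+1}$ --- so that no moment identity is ever needed. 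You instead evaluate the combination $F_{n+1}-(x-c_{n+1})F_{n}+d_{n+1}(x^{2}+1)F_{n-1}$ of integrands in closed form (your residual checks out: the $P_{n}(x)$ terms cancel against the $F_{n}$-branch and the $P_{n-1}(x)$ terms against the $F_{n-1}$-branch, leaving exactly the two pieces you display) and then annihilate its integral using $(c_{n+1}-i)\widehat{\gamma}_{n}=-d_{n+1}\widehat{\gamma}_{n-1}$, which is established in the proof of Theorem \ref{Thm-Basics}. This costs more bookkeeping but makes visible precisely which property of $(c_{n+1},d_{n+1})$ drives the recurrence. Your treatment of the exact degree also differs: the paper defers it to Remark \ref{Rm-ttrr}, extracting the leading coefficient $M_{1}\prod_{j}(1-\mathfrak{l}_{j+2})$ from the recurrence and the chain-sequence property of $\{d_{n+2}\}_{n\geq 1}$, whereas you read off the coefficient of $x^{n-1}$ as $\mathfrak{p}_{n}-\gamma_{n}=\prod_{k=1}^{n}(1-\ell_{k})-\prod_{k=1}^{n}(1-M_{k})>0$ directly from \eqref{Eq-Qnr-Polynomials} with $r=n$; the two expressions agree, and your observation that the $r=n$ case already forces $\deg Q_{n}^{(0)}\leq n-1$ is a clean shortcut. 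The computation of $Q_{1}=M_{1}$ is essentially the same in both proofs.
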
 

\begin{proof}

From \eqref{Eq-Qnr-Polynomials} we have for $r =1,2, \ldots, n$, 
\[ 
\begin{array}{rl}
    Q_{n}^{(r)}(x) = &\dsp  \int_{-\infty}^{+\infty}\frac{(x^2+1)^{n}t^{r}P_n(t)-(x^2+1)^{n} x^r P_n(t)}{(t-x)(t^2+1)^n}d\varphi(t) \\[3ex]
      &\dsp   \quad + \int_{-\infty}^{+\infty}\frac{(x^2+1)^n x^{r}P_n(t)-(t^2+1)^nx^rP_n(x)}{(t-x)(t^2+1)^n}d\varphi(t).
\end{array} 
\]
Clearly the second integral is $x^r Q_{n}^{(0)}(x)$ and, since $(t^r-x^r)/(t-x)$ is a polynomial of degree $r-1$,  from the orthogonal property of $P_{n}$ in Theorem \ref{Thm-Basics} the first integral is identically zero.  Thus, if $Q_{n}^{(0)}(x)$ is of exact degree $n-1$ the first part of Proposition \ref{Thm-Qnr-Properties} is confirmed. This can be verified (see Remark \ref{Rm-ttrr} below) if we can prove the three term recurrence relation satisfied by $\{Q_{n}^{(0)}\}_{n \geq 0}$.

From the $R_{II}$ recurrence \eqref{Eq-TTRR-Special-R2-type} for $\{P_n\}_{n \geq 0}$ we have 
\[ 
  \begin{array}{l}
   \dsp \frac{(x^2+1)^{n}P_{n+1}(t) - (t^2+1)^{n} P_{n+1}(x)}{(t-x)(t^2+1)^{n}} \\[3ex]
   \dsp \hspace{8ex} = \frac{(x^2+1)^{n}(t-c_{n+1})P_{n}(t) - (t^2+1)^{n} (x-c_{n+1})P_{n}(x)}{(t-x)(t^2+1)^{n}} \\[3ex]
   \dsp \hspace{17ex} - \ d_{n+1} \frac{(x^2+1)^{n}P_{n-1}(t) - (t^2+1)^{n-1}(x^2+1) P_{n-1}(x)}{(t-x)(t^2+1)^{n-1}}, 
  \end{array}  
\]
for $n \geq 1$. Thus, integration with respect to $\varphi$ and using the first part Proposition \ref{Thm-Qnr-Properties} gives 
\begin{equation} \label{Eq-TTRR-tempFormula} 
  \begin{array}{l}
   \dsp \int_{-\infty}^{\infty}\frac{(x^2+1)^{n}P_{n+1}(t) - (t^2+1)^{n} P_{n+1}(x)}{(t-x)(t^2+1)^{n}} d \varphi(t)\\[3ex]
   \dsp \hspace{12ex} = (x-c_{n+1})Q_{n}(x) - \ d_{n+1} (x^2+1) Q_{n-1}(x),
  \end{array}  
\end{equation}
for $ n \geq 1$, where $Q_{0}(x) = 0$. Hence, all one needs to verify is that the left hand side of \eqref{Eq-TTRR-tempFormula} represents $Q_{n+1}$ for $n \geq 1$. 

Clearly 
\[
  \begin{array}{l}
    \dsp \int_{-\infty}^{\infty}\frac{(x^2+1)^{n}P_{n+1}(t) - (t^2+1)^{n} P_{n+1}(x)}{(t-x)(t^2+1)^{n}} d \varphi(t) \\[3ex]
    \dsp \hspace{2ex} = \frac{1}{(x^2+1)}  \int_{-\infty}^{\infty}\frac{(x^2+1)^{n+1}(t^2+1)P_{n+1}(t) - (t^2+1)^{n+1} (x^2+1) P_{n+1}(x)}{(t-x)(t^2+1)^{n+1}} d \varphi(t).
  \end{array} 
\]
Hence, from the first part of Proposition \ref{Thm-Qnr-Properties} we conclude that the left hand side of \eqref{Eq-TTRR-tempFormula} is actually $Q_{n+1}(x)$ when $n \geq 1$. Now to complete the proof of Proposition \ref{Thm-Qnr-Properties} all one needs to do is to establish the value of $Q_1$. From $P_1(x) = x- c_1$  and from \eqref{Eq-Qnr-Polynomials}, 
\[
   \begin{array}{rl}
    Q_{1}(x) =&\dsp \int_{-\infty}^{+\infty} \frac{(x^2+1)(t-c_1)-(t^2+1)(x-c_1)}{(t-x)(t^2+1)}d\varphi(t) \\[3ex]
    =&\dsp \int_{-\infty}^{+\infty} \frac{c_1t+1}{t^2+1}d\varphi(t) - x \int_{-\infty}^{+\infty} \frac{t- c_1}{t^2+1}d\varphi(t).
   \end{array}
\] 
Since $P_1(t) = t- c_1$ from the orthogonality of $P_1$ the second integral is zero and thus, 
\[
   \begin{array}{rl}
     Q_{1}(x) = &\dsp  \int_{-\infty}^{+\infty} \frac{c_1t+1}{t^2+1}d\varphi(t) = \int_{-\infty}^{+\infty} \frac{t^2+1}{t^2+1}d\varphi(t) - \int_{-\infty}^{+\infty} \frac{tP_{1}(t)}{t^2+1}d\varphi(t), \\[3ex]
   \end{array}   
\]
where, from Theorem \ref{Thm-Basics},  the first integral on the right hand side is equal to $1$ and the other integral is equal to $1-M_1$. Thus, proving $Q_{1}(x) = M_1$.  This completes the proof. 
\end{proof}

\begin{remark} \label{Rm-ttrr}
Since $\{d_{n+1}\}_{n \geq 1}$ is  a positive chain sequence it follows that $\{d_{n+2}\}_{n \geq 1}$ is also a positive chain sequence (see \cite{Chihara-Book}). Thus, using the three term recurrence for $\{Q_{n}\}_{n \geq 0}$ one can show that $Q_{n}$ is of exact degree $n-1$. Precisely, if $\{\mathfrak{l}_{n+2}\}_{n \geq 0}$ is the minimal parameter sequence of $\{d_{n+2}\}_{n \geq 1}$ then $Q_{n}(x) = M_1 \prod_{j = 0}^{n-1}(1-\mathfrak{l}_{j+2})\, x^{n-1} + \mbox{lower order terms}$. 
\end{remark} 

Now from the three term recurrence relations for $P_{n}$ and $Q_{n}$ one easily finds 
\begin{equation} \label{Eq-Determinant-form} 
  \begin{array}l
   Q_{1}(x) P_{0}(x) - Q_{0}(x) P_{1}(x) = M_1 \quad \mbox{and} \\[1ex] 
   Q_{n}(x) P_{n-1}(x) - Q_{n-1}(x) P_{n}(x) = M_1 d_2 d_3 \cdots d_{n} (x^2+1)^{n-1}, \quad n \geq 2.
  \end{array}
\end{equation}
Hence, $Q_n$ and $P_n$ do not have any common zeros. Moreover, from \eqref{Eq2-omega-nk} and  \eqref{Eq-Qnr-Polynomials}, 
\begin{equation} \label{Eq4-omega-nk}
    \omega_{n,k}  = \frac{Q_{n}(x_{n,k})}{P_{n}^{\prime}(x_{n,k})}, \quad k =1,2, \ldots, n.
\end{equation}

Now the first expression of \eqref{Eq1-omega-nk} for $\omega_{n,k}$ in Theorem \ref{Thm-QuadRule-RealLine} is an immediate  consequence of \eqref{Eq-Determinant-form} and \eqref{Eq4-omega-nk}. Now to obtain the matrix expression for $\omega_{n,k}$ we use \eqref{Eq-PosDef-formula}.  Thus, concluding the proof of Theorem \ref{Thm-QuadRule-RealLine}.  \hfill  \qed  \\[-1ex]

For the polynomials $Q_{n}(x) = Q_{n}^{(0)}(x)$ the expression  \eqref{Eq-Qnr-Polynomials} can also be written as 
\[
  \begin{array}{cl}
    Q_{n}(x) = &\dsp  (x-i)^n\int_{-\infty}^{+\infty}\frac{[(x+i)^n-(t+i)^n]P_n(t)}{(t-x)(t^2+1)^n}d\varphi(t) \\[3ex]
       &\dsp  \qquad \quad + \ \int_{-\infty}^{+\infty}\frac{[(x-i)^nP_n(t)-(t-i)^nP_n(x)](t+i)^n}{(t-x)(t^2+1)^n}d\varphi(t), 
  \end{array}
\]
for $n \geq 1$. Again from the orthogonal property of $P_n$ the first integral above is zero and thus, we have the following alternative expression for $Q_{n}$:  
\begin{equation} \label{Eq-Qn0-Polynomials-Alternative}
    Q_{n}(x) = \int_{-\infty}^{+\infty}\frac{[(x-i)^{n}P_n(t)-(t-i)^{n}P_n(x)]}{(t-x)}\frac{1}{(t-i)^n}d\varphi(t), \quad n \geq 1. 
\end{equation}
Clearly,  $[(x-i)^n P_n(t)-(t-i)^n P_n(x)]/(t-x)$ is a polynomial of degree $n-1$ in $x$ and hence, this also shows that  $Q_{n}(x)$ is a polynomial of degree not exceeding $n-1$. 

From \eqref{Eq-Qn0-Polynomials-Alternative} we also have as another expression for $\omega_{n,k}$,
\[
    \omega_{n,k}  = \int_{-\infty}^{+\infty}\frac{P_n(x)}{(x-x_{n,k})P_{n}^{\prime}(x_{n,k})}\frac{(x_{n,k}-i)^{n}}{(x-i)^n}d\varphi(x), \quad  k = 1, 2, \ldots, n.
\]

Finally, using \eqref{Eq4-omega-nk} observe that for the rational functions $Q_{n}/P_{n}$, we also have  
\[
    \frac{Q_{n}(x)}{P_{n}(x)} = \sum_{k=1}^{n} \frac{Q_{n}(x_{n,k})/P_{n}^{\prime}(x_{n,k})}{x -x_{n,k}} = \sum_{k=1}^{n} \frac{\omega_{n,k}}{x -x_{n,k}}. 
\]
%

\setcounter{equation}{0}
\section{Quadrature rules on the unit circle} 
\label{Sec-QuadRules-UC}

Interpolatory quadrature rules on the unit circle, first explicitly appeared in \cite{JONES}, are based on the zeros of para-orthogonal polynomials on the unit circle. Given a positive measure $\mu$ on the unit circle,  let us denote by $\{\Phi_{n}(\mu;z)\}_{n \geq 0}$ and  $\{\alpha_{n}(\mu)\}_{n \geq0}$, respectively,  the associated  sequences of monic orthogonal polynomials and  Verblunsky coefficients.   

 For any given $n \geq 1$  let    
\begin{equation} \label{Eq-Notation-POP}
     \Psi_{n}(\mu; \rho, z)  = z \Phi_{n-1}(\mu;z) - \rho \Phi_{n-1}^{\ast}(\mu;z), 
\end{equation}
where $\rho$ is such that $|\rho| = 1$.  The monic polynomial $\Psi_{n}(\mu; \rho, z)$ of degree $n$ in $z$ is known as a para-orthogonal polynomial and its  zeros $\Xi_{n,j}(\mu; \rho)$, $j =1,2, \ldots n$,  are all simple and lie exactly on the unit circle $|z|=1$. Moreover, the  quadrature rule  
\[
     \int_{\T} \mathcal{F}(\zeta) d \mu(\zeta) = \sum_{k=1}^{n} \Lambda_{n,k}(\mu; \rho)\, \mathcal{F}\big(\Xi_{n,k}(\mu; \rho)\big) 
\]
is valid for $\mathcal{F}(z) \in span\{z^{-n+1},z^{-n+2},\ldots,z^{n-2},z^{n-1}\}$ if 
\begin{equation} \label{Eq-Lambda-Interpol-Form}
      \Lambda_{n,k}(\mu; \rho) = \frac{1}{\Psi_{n}^{\prime}\big(\mu; \rho, \Xi_{n,k}(\mu; \rho)\big)} \int_{\T} \frac{\Psi_{n}(\mu; \rho, \zeta)}{\zeta - \Xi_{n,k}(\mu; \rho)} \, d \mu(\zeta), \quad k =1, 2, \ldots, n.
\end{equation}
The above statements regarding para-orthogonal polynomials and associated quadrature rules follow from \cite{JONES}. 

With specific choices of $\{\rho_{n}\}_{n\geq 0}$, such that $|\rho_{n}| =1$, $n \geq 0$, the resulting sequences of para-orthogonal polynomials $\{\Psi_{n}(\mu; \rho_{n-1}, z)\}_{n \geq 1}$ can be made to satisfy nice three term recurrence relations (see, for example, \cite{BSRS_2016} and \cite{Costa-Felix-Ranga-JAT2013}).

From the expression that connects $\xi_{n,k}$ and  $x_{n,k}$ in Theorem \ref{Thm-QuadRule-UnitCircle}, if we consider the sequence of polynomials $\{R_{n}\}_{n \geq 0}$ given by 
\begin{equation} \label{Eq-R-to-P}
    R_{n}(\zeta) = 
        R_{n}\Big(\frac{x+i}{x-i}\Big) = \frac{2^{n}}{(x-i)^{n}} P_{n}(x), \quad n \geq 0, 
\end{equation}
then $\xi_{n,k}$, $k =1,2, \ldots, n$ are the zeros of $R_{n}(z)$ and further, from \eqref{Eq-TTRR-Special-R2-type}, 
\begin{equation} \label{Eq-TTRR-Special-ParaOrth-polynomials}
    R_{n+1}(z) = [(1+ic_{n+1})z+(1-ic_{n+1})] R_{n}(z) - 4 d_{n+1} z R_{n-1}(z), \quad n \geq 1, 
\end{equation}  
with $R_{0}(z) = 1$ and $R_{1}(z) = (1+ic_{1})z+(1-ic_{1})$.

 With such knowledge we can state the following theorem, which gives the exact inverse map of \eqref{Eq-Verblunsky-Characterization-1} in Theorem \ref{Thm-Basics}.  The proof of Theorem \ref{Thm-Inv-Map-1} below also gives information about the para-orthogonal polynomials that correspond to the first quadrature rule given by Theorem \ref{Thm-QuadRule-UnitCircle}. 

\begin{theorem} \label{Thm-Inv-Map-1}

Let $\mu$ be a probability measure on the unit circle such that the integral $\mathpzc{A}(\mu) = \int_{\T} |\zeta-1|^{-2} d \mu(\zeta)$ exists.  Let $I(\mu) = \int_{\T} \zeta(\zeta-1)^{-1} d\mu(\zeta)$ and let $\{\alpha_{n}\}_{n \geq 0} =\{\alpha_{n}(\mu)\}_{n \geq 0}$ be the associated sequence of Verblunsky coefficients.

Then $\mu$ is the probability measures on the unit circle given by Theorem \ref{Thm-Basics} under the condition  $\mathpzc{S} < \infty$, if and only if, 
\[
      \tau_1 = \frac{\,I(\mu)\,}{\overline{I(\mu)}}, \quad   c_{1} = i \frac{\tau_1 - 1}{\tau_1 + 1} = - \frac{2\Im(\tau_1)}{|\tau_1 + 1|^2}, 
\]
\begin{equation} \label{Eq-alpha-to-cn-and-elln} 
     c_{n+1}  = \frac{\Im(\tau_n \alpha_{n-1})}{\Re(1+\tau_n\alpha_{n-1})}, \quad \ell_{n+1} = \frac{1}{2}\frac{|1+\tau_n\alpha_{n-1}|^2}{\Re(1+\tau_n\alpha_{n-1})}, \quad n \geq 1,
\end{equation} 
where 
\begin{equation} \label{Eq-RecRelation-taun}
     \tau_{n+1} = \tau_{n} \, \frac{1 + \overline{\tau_n \alpha_{n-1}}}{1 + \tau_n \alpha_{n-1}}, \quad n \geq 1. 
\end{equation}

\end{theorem}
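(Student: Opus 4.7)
The plan is to treat \eqref{Eq-Verblunsky-Characterization-1}, for each $n\geq 1$, as an algebraic equation determining the real unknowns $c_{n+1}$ and $\ell_{n+1}$ from the data $\alpha_{n-1}$ and $\tau_{n}$, and to solve it by exploiting the reality of $c_{n+1}$ and $\ell_{n+1}$. Once $c_{n+1}$ has been isolated, the recurrence \eqref{Eq-RecRelation-taun} will follow from the definition $\tau_{n+1}/\tau_{n}=(1-ic_{n+1})/(1+ic_{n+1})$ via a brief conjugation trick. The initialization, namely the formulas for $\tau_1$ and $c_1$ in terms of the moment $I(\mu)$, requires separate treatment and will be obtained by transporting the integral representation of $c_1$ from Theorem \ref{Thm-Basics} (which lives on the real line, against $\varphi$) back to the unit circle, against $\mu$. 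The ``if and only if'' is then immediate, because the formulas are by construction a two-sided algebraic inverse of \eqref{Eq-Verblunsky-Characterization-1} and $\{\alpha_n(\mu)\}$ determines $\mu$ uniquely.

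For $n\geq 1$, I would first clear the denominator in \eqref{Eq-Verblunsky-Characterization-1} and rearrange to obtain
\[
2\ell_{n+1}=(1+\tau_{n}\alpha_{n-1})(1-ic_{n+1}).
\]
Since the left-hand side is real, setting the imaginary part of the right-hand side equal to zero and using $\Im(1+\tau_n\alpha_{n-1})=\Im(\tau_n\alpha_{n-1})$ yields at once $c_{n+1}=\Im(\tau_n\alpha_{n-1})/\Re(1+\tau_n\alpha_{n-1})$. Feeding this back into the real part and collecting $\Re(\cdot)^2+\Im(\cdot)^2=|\cdot|^2$ produces $2\ell_{n+1}=|1+\tau_n\alpha_{n-1}|^2/\Re(1+\tau_n\alpha_{n-1})$, which are exactly the formulas in \eqref{Eq-alpha-to-cn-and-elln}. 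For the $\tau$-recurrence, rewriting the displayed identity as $1-ic_{n+1}=2\ell_{n+1}/(1+\tau_n\alpha_{n-1})$ and taking complex conjugates (keeping in mind that both $\ell_{n+1}$ and $c_{n+1}$ are real) gives $1+ic_{n+1}=2\ell_{n+1}/\overline{(1+\tau_n\alpha_{n-1})}$; dividing the two expressions yields $(1-ic_{n+1})/(1+ic_{n+1})=\overline{(1+\tau_n\alpha_{n-1})}/(1+\tau_n\alpha_{n-1})$, which upon multiplication by $\tau_n$ is \eqref{Eq-RecRelation-taun}.

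For the base case the expression $c_1=\big[\int x(x^2+1)^{-1}d\varphi\big]\big/\big[\int(x^2+1)^{-1}d\varphi\big]$ from Theorem \ref{Thm-Basics} is the natural starting point. Under the substitution $\zeta=(x+i)/(x-i)$, together with $d\varphi(x)=-d\nu_{\underline{0}}(\zeta)$, a direct calculation yields the dictionary $(x^2+1)^{-1}\leftrightarrow|\zeta-1|^2/4$ and $x(x^2+1)^{-1}\leftrightarrow\Im(\zeta)/2$ on $\T$; combined with $\int_\T|\zeta-1|^2d\nu_{\underline{0}}=2(1-\Re(m_1))$, where $m_1=\int_\T\zeta\,d\nu_{\underline{0}}$, this produces $c_1=\Im(m_1)/(1-\Re(m_1))$. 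Converting back to $\mu$ via $d\mu=\mathpzc{A}(\mu)|\zeta-1|^2d\nu_{\underline{0}}$, and using $|\zeta-1|^2/(\zeta-1)=\overline{\zeta}-1$ on $\T$, gives $I(\mu)=\mathpzc{A}(\mu)(1-m_1)$; hence $\tau_1:=I(\mu)/\overline{I(\mu)}=(1-m_1)/(1-\overline{m_1})$ has modulus one, and a short computation yields $i(\tau_1-1)/(\tau_1+1)=\Im(m_1)/(1-\Re(m_1))=c_1$, which also equals $-2\Im(\tau_1)/|\tau_1+1|^2$ since $|\tau_1|=1$. The only step requiring real care is this last transport from $\varphi$ to $\mu$, where one must correctly track the orientation-reversing substitution $x\leftrightarrow\zeta$ responsible for the minus sign in the relation between $\varphi$ and $\nu_{\underline{0}}$; the remainder of the argument is essentially mechanical algebra.
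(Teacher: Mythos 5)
Your proposal is correct, and its algebraic core coincides with the paper's: the paper likewise clears the denominator in \eqref{Eq-Verblunsky-Characterization-1} to get $1+\tau_n\alpha_{n-1}=2\ell_{n+1}/(1-ic_{n+1})$ and reads off $c_{n+1}$ and $\ell_{n+1}$ from real and imaginary parts, and obtains \eqref{Eq-RecRelation-taun} from $\tau_{n+1}/\tau_n=(1-ic_{n+1})/(1+ic_{n+1})$ exactly as you do. Where you genuinely depart from the paper is the initialization $\tau_1=I(\mu)/\overline{I(\mu)}$: the paper imports this from \cite[Thm.\,4.2]{BSRS_2016} by identifying the polynomials $R_n$ of \eqref{Eq-R-to-P} with the para-orthogonal polynomials $\Psi_n(\mu;-\tau_n,z)$ and invoking the orthogonality \eqref{Eq-Orth-Prop-Rn}, whereas you derive it self-containedly by transporting the integral formula for $c_1$ in \eqref{Eq-Coeff-Rep} to the circle via $\zeta=(x+i)/(x-i)$, computing $I(\mu)=\mathpzc{A}(\mu)(1-m_1)$ with $m_1=\int_{\T}\zeta\,d\nu_{\underline{0}}$, and checking $i(\tau_1-1)/(\tau_1+1)=\Im(m_1)/(1-\Re(m_1))=c_1$ (equivalently $\tau_1=(1-ic_1)/(1+ic_1)$, which is what the theorem needs); I verified this computation and it is sound. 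Your route is more elementary and avoids an external citation; the paper's route buys the explicit identification of the para-orthogonal polynomials underlying the quadrature rule \eqref{Eq-QR-UC-mu}, which it reuses in the proof of Theorem \ref{Thm-QuadRule-UnitCircle}. Note that, like the paper, you treat the converse direction of the equivalence somewhat lightly (one should in principle confirm that the $\ell_{n+1}$ produced lie in $(0,1)$ and form the minimal parameter sequence of a positive chain sequence with $\mathpzc{S}<\infty$), but this does not put you below the paper's own level of detail.
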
 

\begin{proof}

From \eqref{Eq-Verblunsky-Characterization-1}, 
\[
     1 + \tau_{n} \alpha_{n-1} = \frac{2 \ell_{n+1}}{1-ic_{n+1}} = \frac{2\ell_{n+1}}{1+c_{n+1}^2} (1+i c_{n+1}), \quad n \geq 1.
\]
Hence, by considering the real and imaginary parts, we find  $\{c_{n+1}\}_{n \geq 1}$ and  $\{\ell_{n+1}\}_{n \geq 1}$ are as in \eqref{Eq-alpha-to-cn-and-elln}. 

Moreover, from $\tau_{n+1}/\tau_{n} = (1-ic_{n+1})(1+ic_{n+1})$, the sequence $\{\tau_{n}\}_{n \geq 1}$ can be obtained from $\{\alpha_{n}\}_{n \geq 0}$ and $\tau_1 = (1-ic_1)/(1+ic_1)$ by \eqref{Eq-RecRelation-taun}. Hence, all one needs to establish is how to choose the value of $\tau_1$. 

The choice $\tau_1 = I(\mu)/\overline{I(\mu)}$ follows from \cite[Thm.\,4.2]{BSRS_2016}, by observing that the polynomials $\{R_{n}\}_{n \geq 0}$ given by $R_{0}(z) =1$ and 
\[
    R_{n}(z) \prod_{k=0}^{n-1}\frac{1-\Re(\tau_{k+1}\alpha_{k-1})}{1-\overline{\tau_{k+1}\alpha_{k-1}}} = \Psi_{n}(\mu; -\tau_{n}, z), \quad n \geq 1,
\] 
where one must take $\alpha_{-1} = -1$, satisfy the three term recurrence relation \eqref{Eq-TTRR-Special-ParaOrth-polynomials} and that 
\begin{equation} \label{Eq-Orth-Prop-Rn}
      \int_{\T}\zeta^{-n+k} R_n(\zeta) \frac{\zeta}{\zeta - 1}d \mu(\zeta) = 0, \quad k=0,1, \ldots, n-1.
\end{equation}
From \eqref{Eq-R-to-P} observe that this latter orthogonality is equivalent to \eqref{Eq-Orthogonality-for-Pn}. This completes the proof of Theorem \ref{Thm-Inv-Map-1}. 
\end{proof}

\begin{remark}
{  In \cite[Thm.\,4.2]{BSRS_2016} the results were derived under a weak  assumption that the measure $\mu$ is such that only the principal value  integral $I(\mu) = \dashint_{\T}\zeta(\zeta-1)^{-1} d\mu(\zeta)$ need to exist. However, in the present case we have made a stronger assumption on $\mu$ such that $\int_{\T} |\zeta-1|^{-2} d \mu(\zeta)$ exists and hence,  $\dashint_{\T}\zeta(\zeta-1)^{-1} d\mu(\zeta) = \int_{\T}\zeta(\zeta-1)^{-1} d\mu(\zeta)$ also holds. 
}
\end{remark}

Now let $\nu$ be any probability measure on the unit circle such that 
\begin{equation} \label{Eq-nu-to-mu}
     \mu(e^{i\theta}) = \frac{1}{\mathpzc{B}(\nu)}\int_{0}^{\theta} |e^{i\Theta}-1|^2\, d\nu(e^{i\Theta}),  
\end{equation}
where we recall that  $\mathpzc{B}(\nu) = \int_{\T} |\zeta-1|^2 d\nu(\zeta)$. Let us also assume that $\nu$ be such that it has a pure point of size $\delta$ at $\zeta=1$. That is, we can use the notation $\nu_{\underline{\delta}}$ for $\nu$.

Hence, with the Uvarov transformation as in \eqref{Eq-nu-Class}, we can also generate a family of probability measures $\nu_{\underline{\epsilon}}$ for $0 \leq \epsilon < 1$. Precisely,  
\begin{equation} \label{Eq-nu-Class2}
      \int_{\T} \phi(\zeta)\,d\nu_{\underline{\epsilon}}(\zeta) = \frac{1-\epsilon}{1-\delta}\int_{\T} \phi(\zeta)\,d\nu_{\underline{\delta}} + \frac{\epsilon-\delta}{1-\delta} \phi(1).
\end{equation}
Observe that for any $\epsilon$ such that $0 \leq \epsilon < 1$, 
\begin{equation} \label{Eq-nu-mu-relations}
     \mu(e^{i\theta}) = \frac{1}{\mathpzc{B}(\nu_{\underline{\epsilon}})}\int_{0}^{\theta} |e^{i\Theta}-1|^2\, d\nu_{\underline{\epsilon}}(e^{i\Theta}) \ \  \mbox{and} \ \  \nu_{\underline{0}}(e^{i\theta}) = \frac{1}{\mathpzc{A}(\mu)} \int_{0}^{\theta}\frac{1}{|e^{i\Theta}-1|^2}  d \mu(e^{i\Theta}).
\end{equation}

\begin{theorem} \label{Thm-Inv-Map-2}

Given any probability measure  $\nu$ on the unit circle, let the  probability measures on the unit circle  $\mu$ and $\nu_{\underline{\epsilon}}$ be those  as in  \eqref{Eq-nu-to-mu}, \eqref{Eq-nu-Class2} and \eqref{Eq-nu-mu-relations}. Moreover, let $\{\alpha_{n}(\nu_{\underline{\epsilon}})\}_{n\geq 0}$ be the Verblunsky coefficients associated with the measure $\nu_{\underline{\epsilon}}$.

Then $\mu$ is the probability measures on the unit circle given by Theorem \ref{Thm-Basics}, if and only if, 
\begin{equation*} \label{Eq-alphaNu-to-cn-and-elln} 
  \begin{array}{c}
      d_{n+1} =  \big[1-g_{n}(\nu_{\underline{\epsilon}})\big] g_{n+1}(\nu_{\underline{\epsilon}}),   \\[2ex]
    \dsp  c_{n}  =  \frac{-\Im\big(\tau_{n-1}\alpha_{n-1}(\nu_{\underline{\epsilon}})\big)}{1 -\Re\big(\tau_{n-1}\alpha_{n-1}(\nu_{\underline{\epsilon}})\big)}\quad \mbox{and} \quad g_{n}(\nu_{\underline{\epsilon}}) = \frac{1}{2}\, \frac{\big|1 - \tau_{n-1}\alpha_{n-1}(\nu_{\underline{\epsilon}})\big|^2}{1 -\Re\big(\tau_{n-1}\alpha_{n-1}(\nu_{\underline{\epsilon}})\big)}, 
  \end{array}
\end{equation*} 
for $n \geq 1$, where $\tau_{0} = 1$ and $\{\tau_{n}\}_{n\geq 1}$ is the same as in Theorems \ref{Thm-Basics} and \ref{Thm-Inv-Map-1}, but can also be derived by the following alternative recurrence 
\begin{equation} \label{Eq-TauofNu-RR}
     \tau_{n} =  \tau_{n-1} \, \frac{1 - \overline{\tau_{n-1}\alpha_{n-1}(\nu_{\underline{\epsilon}})}}{1 - \tau_{n-1}\alpha_{n-1}(\nu_{\underline{\epsilon}})}, \ \ n \geq 1. 
\end{equation} 

\end{theorem}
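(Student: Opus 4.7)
The plan is to parallel the proof of Theorem \ref{Thm-Inv-Map-1}, replacing the role of $\mu$ by $\nu_{\underline{\epsilon}}$ and the maximal parameter sequence by a general parameter sequence of $\{d_{n+1}\}_{n \geq 1}$. The central identity to establish is the generalization of \eqref{Eq-Verblunsky-Characterization-22}, namely
\begin{equation*}
\alpha_{n-1}(\nu_{\underline{\epsilon}}) = \frac{1}{\tau_{n-1}}\,\frac{1-2 g_n(\nu_{\underline{\epsilon}}) - i c_n}{1 - i c_n}, \quad n \geq 1,
\end{equation*}
for a parameter sequence $\{g_{n+1}(\nu_{\underline{\epsilon}})\}_{n \geq 0}$ of the positive chain sequence $\{d_{n+1}\}_{n \geq 1}$ that depends on $\epsilon$ (and which, for $\epsilon = 0$, reduces to the maximal sequence $\{M_{n+1}\}$, recovering \eqref{Eq-Verblunsky-Characterization-22}). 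Both directions of the \emph{iff} will drop out of this identity: it defines the inverse of the map $\{c_n,d_{n+1}\} \mapsto \{\alpha_n(\nu_{\underline{\epsilon}})\}$ characterized in Theorem \ref{Thm-Basics}.

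For the derivation, I would adapt the argument of \cite[Thm.\,4.2]{BSRS_2016} already invoked in the proof of Theorem \ref{Thm-Inv-Map-1}. With $\{R_n\}_{n \geq 0}$ as in \eqref{Eq-R-to-P}, one verifies that a suitable scalar multiple of $R_n(z)$ coincides with the para-orthogonal polynomial $\Psi_n(\nu_{\underline{\epsilon}}; -\tau_n, z)$ built from $\Phi_{n-1}(\nu_{\underline{\epsilon}}; z)$ via \eqref{Eq-Notation-POP}; matching the coefficients of $z$ and of $1$ in the Szeg\H{o} recursion  $\Phi_n(z) = z \Phi_{n-1}(z) - \overline{\alpha_{n-1}(\nu_{\underline{\epsilon}})}\Phi_{n-1}^{\ast}(z)$ against the three term recurrence \eqref{Eq-TTRR-Special-ParaOrth-polynomials} will produce the central identity. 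The required orthogonality for $R_n$ with respect to $\nu_{\underline{\epsilon}}$ reduces, via \eqref{Eq-nu-mu-relations} and \eqref{Eq-nu-Class2}, to the already-established orthogonality \eqref{Eq-Orthogonality-for-Pn} of $P_n$ with respect to $\varphi$, together with an evaluation of $R_n$ at $\zeta = 1$ which pins down the value of $g_n(\nu_{\underline{\epsilon}})$.

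Once the central identity is in hand, the explicit inversion formulas are obtained by straightforward complex algebra: rewrite it as
\[
1 - \tau_{n-1}\alpha_{n-1}(\nu_{\underline{\epsilon}}) = \frac{2\, g_n(\nu_{\underline{\epsilon}})\,(1 + i c_n)}{1 + c_n^2},
\]
read off $\Re(1 - \tau_{n-1}\alpha_{n-1}(\nu_{\underline{\epsilon}})) = 2 g_n/(1 + c_n^2)$ and $\Im(1 - \tau_{n-1}\alpha_{n-1}(\nu_{\underline{\epsilon}})) = 2 c_n g_n/(1 + c_n^2)$, then divide to isolate $c_n$ and use $|1 - \tau_{n-1}\alpha_{n-1}(\nu_{\underline{\epsilon}})|^2 = 4 g_n^2/(1+c_n^2)$ to isolate $g_n(\nu_{\underline{\epsilon}})$. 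The alternative recurrence \eqref{Eq-TauofNu-RR} then follows from $\tau_n/\tau_{n-1} = (1 - i c_n)/(1 + i c_n)$ by substituting the previous display and its complex conjugate, mirroring the passage from \eqref{Eq-alpha-to-cn-and-elln} to \eqref{Eq-RecRelation-taun}. The equation $d_{n+1} = (1 - g_n(\nu_{\underline{\epsilon}})) g_{n+1}(\nu_{\underline{\epsilon}})$ is automatic from the identification of $\{g_{n+1}(\nu_{\underline{\epsilon}})\}$ as a parameter sequence.

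The main obstacle is exactly that identification: showing that the quantity $g_n(\nu_{\underline{\epsilon}})$ produced by the above construction genuinely satisfies $0 < g_n(\nu_{\underline{\epsilon}}) < 1$ and $d_{n+1} = (1 - g_n)g_{n+1}$, and that as $\epsilon$ ranges over $[0,1)$ the initial value $g_1(\nu_{\underline{\epsilon}})$ sweeps the admissible interval between $\ell_1 = 0$ and $M_1$. This requires tracking how the Uvarov shift \eqref{Eq-nu-Class2} at the spectral point $\zeta = 1$ translates into a one-parameter change of parameter sequences of $\{d_{n+1}\}$; the weight formula $\widehat{\lambda}_{n+1,n+1} = \prod_{k=1}^{n}(1-M_k)/(1-\ell_k)$ in Theorem \ref{Thm-QuadRule-UnitCircle} should enter naturally as the bookkeeping tool linking the mass at $\zeta = 1$ to the parameters $g_n(\nu_{\underline{\epsilon}})$.
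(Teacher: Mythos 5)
Your algebraic passage from the ``central identity'' $\alpha_{n-1}(\nu_{\underline{\epsilon}}) = \tau_{n-1}^{-1}\,(1-2g_{n}(\nu_{\underline{\epsilon}})-ic_{n})/(1-ic_{n})$ to the displayed formulas for $c_{n}$, $g_{n}(\nu_{\underline{\epsilon}})$ and to \eqref{Eq-TauofNu-RR} is correct, and that identity is indeed the right target. The gap is in how you propose to reach it: you assert that a scalar multiple of $R_{n}(z)$ coincides with the \emph{degree-$n$} para-orthogonal polynomial $\Psi_{n}(\nu_{\underline{\epsilon}};-\tau_{n},z)$ built from $\Phi_{n-1}(\nu_{\underline{\epsilon}};z)$ via \eqref{Eq-Notation-POP}. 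That identification is exactly what holds for the measure $\mu$ (Theorem \ref{Thm-Inv-Map-1}); it is false for $\nu_{\underline{\epsilon}}$. The paper instead identifies $R_{n}$, up to a constant, with the modified kernel (CD-kernel) polynomial $\Psi_{n+1}(\nu_{\underline{\epsilon}};\tau_{n},z)/(z-1)$ as in \eqref{Eq-nu-Psi-R}, where the choice $\tau_{n}=\Phi_{n}(\nu_{\underline{\epsilon}};1)/\Phi_{n}^{\ast}(\nu_{\underline{\epsilon}};1)$ forces the factor $z-1$; equivalently, it is $(z-1)R_{n}(z)$, of degree $n+1$, that is para-orthogonal with respect to $\nu_{\underline{\epsilon}}$. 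This is already signalled by the quadrature rule \eqref{Eq-QR-UC-nu}, which has $n+1$ nodes including $\zeta=1$, and by the orthogonality: transporting \eqref{Eq-Orth-Prop-Rn} through $d\mu=\mathpzc{B}(\nu)^{-1}|\zeta-1|^{2}d\nu$ gives $\int_{\T}\zeta^{-j}(1-\zeta)R_{n}(\zeta)\,d\nu_{\underline{\epsilon}}(\zeta)=0$ for $j=1,\ldots,n$ (unchanged by the point mass at $\zeta=1$), which is para-orthogonality of the degree-$(n+1)$ polynomial $(z-1)R_{n}(z)$, not of $R_{n}$ itself.

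The error is not cosmetic. In the example of Section \ref{Sec-Eample-simple}, $\nu_{\underline{0}}$ is the Lebesgue measure, so every degree-$n$ para-orthogonal polynomial of $\nu_{\underline{0}}$ is $z^{n}-\rho$ with equally spaced zeros of gap $2\pi/n$, whereas $R_{n}$ has its zeros at the $(n+1)$-th roots of unity other than $1$; no choice of $\rho$ reconciles these for $n\geq 2$. Moreover, matching the Szeg\H{o} recursion for $\Phi_{n-1}(\nu_{\underline{\epsilon}};\cdot)$ against \eqref{Eq-TTRR-Special-ParaOrth-polynomials} under your identification would reproduce the index pattern of \eqref{Eq-Verblunsky-Characterization-1} (relating $\alpha_{n-1}$ to $\tau_{n}$, $c_{n+1}$ and a parameter with index $n+1$) rather than the pattern $\tau_{n-1}$, $c_{n}$, $g_{n}$ required by the theorem; that shift of index comes precisely from dividing the degree-$(n+1)$ para-orthogonal polynomial by $z-1$. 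To repair the argument you must work with $\Psi_{n+1}(\nu_{\underline{\epsilon}};\tau_{n},z)/(z-1)$ and its three-term recurrence (which the paper outsources to \cite{Costa-Felix-Ranga-JAT2013}). Your closing concern about verifying that $\{g_{n+1}(\nu_{\underline{\epsilon}})\}_{n\geq 0}$ is a genuine parameter sequence of $\{d_{n+1}\}_{n\geq 1}$ is legitimate, but it is secondary to this structural misidentification.
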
 

\begin{proof}

The proof of this theorem follows from results established in \cite{BSRS_2016} and \cite{Costa-Felix-Ranga-JAT2013}. To sketch the direction behind the proof, we consider the  monic para-orthogonal polynomials  
$\Psi_{n+1}(\nu_{\underline{\epsilon}}; \tau_{n}, z)$,  $n \geq 0$, where $\tau_{n}= \Phi_{n}(\nu_{\underline{\epsilon}};1)/\Phi_{n}^{\ast}(\nu_{\underline{\epsilon}};1)$, $n \geq 0$. From the recurrence relation for $\{\Phi_{n}(\nu_{\underline{\epsilon}};z)\}_{n \geq 0}$, one can easily  verify that $\{\tau_{n}\}_{n \geq 0}$ satisfies \eqref{Eq-TauofNu-RR}. The polynomials $\Psi_{n+1}(\nu_{\underline{\epsilon}}; \tau_{n}, z)/(z-1)$ are modified kernel polynomials (or CD-kernels) and that 
\begin{equation} \label{Eq-nu-Psi-R}
     R_{n}(z) = \frac{\prod_{j=0}^{n-1} \big[1- \tau_{j}\alpha_{j}(\nu_{\underline{\epsilon}})\big]}{\prod_{j=0}^{n-1} \big[1-\mathcal{R}e\big(\tau_{j}\alpha_{j}(\nu_{\underline{\epsilon}})\big)\big]} \,\frac{\Psi_{n+1}\big(\nu_{\underline{\epsilon}}; \tau_{n}, z\big)}{z-1}, \quad n \geq 1,
\end{equation}
satisfy the three term recurrence relation \eqref{Eq-TTRR-Special-ParaOrth-polynomials} follows from \cite{Costa-Felix-Ranga-JAT2013}. The sequence $\{R_{n}\}_{n \geq 0}$ satisfying the orthogonality property \eqref{Eq-Orth-Prop-Rn} can also be easily verified. With these observations we establish the proof of Theorem \ref{Thm-Inv-Map-2}.  
\end{proof} 	

\begin{remark} It is important to observe that the values of sequences $\{\tau_n\}_{n \geq 0}$, $\{c_n\}_{n \geq 1}$ and $\{d_{n+1}\}_{n \geq 1}$ remain the same for any $\nu_{\underline{\epsilon}}$ such that $0 \leq \epsilon < 1$. Again, they are same as those in Theorems \ref{Thm-Basics} and \ref{Thm-Inv-Map-1}. Also as shown in \cite{Costa-Felix-Ranga-JAT2013}, the sequence $\{g_{n+1}(\nu_{\underline{\epsilon}})\}_{n \geq 0}$, which varies with $\epsilon$,  is  a parameter sequence of the positive chain sequence $\{d_{n+1}\}_{n \geq 1}$. The sequence $\{g_{n+1}(\nu_{\underline{0}})\}_{n \geq 0} = \{M_{n+1}\}_{n \geq 0}$ is the maximal parameter sequence of $\{d_{n+1}\}_{n \geq 1}$. 

\end{remark}

Now we can consider the proof of Theorem \ref{Thm-QuadRule-UnitCircle}. 

\begin{proof}[Proof of Theorem \ref{Thm-QuadRule-UnitCircle}]

From \eqref{Eq-R-to-P},  if $x_{n,k}$, $k=1,2, \ldots, n$ are the zeros of $P_{n}$ then $\xi_{n,k} = (x_{n,k} +i )/(x_{n,k} - i)$, $k =1, 2, \ldots, n$ are the zeros of $R_{n}(z)$, or equivalently, the zeros of the monic para-orthogonal polynomial $\Psi_{n}\big(\mu; -\tau_n, z\big)$.

Assuming $\mathpzc{A}(\mu) < \infty$,  we consider the $n$ point interpolatory quadrature rule 
\[
     \int_{\T} \mathcal{F}(\zeta) d \mu(\zeta) = \sum_{k=1}^{n} \lambda_{n,k}\, \mathcal{F}\big(\xi_{n,k}\big), 
\]
on the zeros $\xi_{n,k} = \Xi_{n,k}(\mu;-\tau_n)$ of $\Psi_{n}\big(\mu; -\tau_{n}, z\big)$, which is valid for any 
\[  
     \mathcal{F}(z) \in span\{z^{-n+1},z^{-n+2},\ldots,z^{n-2},z^{n-1}\}. 
\]
Since, 
\[
    \frac{(z+1)^{r} (z-1)^{2n-2-r}}{z^{n-1}}, \quad r=0,1 \ldots, 2n-2,
\]
is a basis for $span\{z^{-n+1},z^{-n+2},\ldots,z^{n-2},z^{n-1}\}$, the coefficients $\lambda_{n,k}= \Lambda_{n,k}(\mu; -\tau_n)$ should be  uniquely determined by 
\begin{equation} \label{Eq-Special-UC-QuadratureEquality}
     \int_{\T} \frac{(\zeta+1)^{r} (\zeta-1)^{2n-2-r}}{\zeta^{n-1}} d \mu(\zeta) = \sum_{k=1}^{n} \lambda_{n,k}\, \frac{(\xi_{n,k}+1)^{r} (\xi_{n,k}-1)^{2n-2-r}}{\xi_{n,k}^{n-1}}, 
\end{equation}
for $r = 0, 1, \ldots, 2n-2$. 

Now we consider the quadrature rule given by Theorem \ref{Thm-QuadRule-RealLine}  which holds for any $f$ such that $(x^2+1)^n f(x) \in \PP_{2n-1}$.  Thus, we have 
\begin{equation} \label{Eq-QR-with-RealCanonicBasis}
     \int_{-\infty}^{\infty} \frac{x^r}{(x^2+1)^{n}} d \varphi(x) = \sum_{k=1}^{n} \omega_{n,k} \frac{x_{n,k}^r}{(x_{n,k}^2+1)^{n}}, \quad r=0,1, \ldots, 2n-1. 
\end{equation}
Hence, by using in \eqref{Eq-QR-with-RealCanonicBasis} the transformation  $\zeta = (x+i)/(x-i)$ together with the results given by Theorem \ref{Thm-Basics}, but only for $r =0,1, \ldots, 2n-2$,  one finds 
\[ 
   \begin{array}{l}
     \dsp \int_{\T} \frac{(\zeta+1)^{r} (\zeta-1)^{2n-2-r}}{\zeta^{n-1}} d \mu(\zeta) \\[3ex]
     \dsp \hspace{16ex} = \   \sum_{k=1}^{n} \mathpzc{A}(\mu)  \omega_{n,k}\frac{(\xi_{n,k}-1)^2}{-\xi_{n,k}}\, \frac{(\xi_{n,k}+1)^{r} (\xi_{n,k}-1)^{2n-2-r}}{\xi_{n,k}^{n-1}}, 
  \end{array}
\]
for $r = 0, 1, \ldots, 2n-2$. Thus, comparing this with \eqref{Eq-Special-UC-QuadratureEquality} we get 
\[
     \lambda_{n,k} = \mathpzc{A}(\mu)  \omega_{n,k}\frac{(\xi_{n,k}-1)^2}{-\xi_{n,k}} = \mathpzc{A}(\mu)  \omega_{n,k} |\xi_{n,k}-1|^2, \quad k =1,2, \ldots, n, 
\] 
and the results associated with  the quadrature rule \eqref{Eq-QR-UC-mu} in Theorem \ref{Thm-QuadRule-UnitCircle} are confirmed.\bigskip

We now consider the interpolatory quadrature rule 
\[
     \int_{\T} \phi(\zeta) d \nu_{\underline{0}}(\zeta) = \widehat{\lambda}_{n+1, n+1}\, \phi(1)\ + \  \sum_{k=1}^{n} \widehat{\lambda}_{n+1,k}\, \phi\big(\xi_{n,k}\big), 
\]
based on the zeros of the para-orthogonal polynomials $\Psi_{n+1}\big(\nu_{\underline{0}}; \tau_n, z\big)$, which holds for 
any $\phi$ in  $span\{z^{-n},z^{-n+1},\ldots,z^{n-1},z^{n}\}$. 

Since
\[
    \frac{(z+1)^{r} (z-1)^{2n-r}}{z^{n}}, \quad r=0,1 \ldots, 2n,
\]
is a basis for $span\{z^{-n},z^{-n+1},\ldots,z^{n-1},z^{n}\}$, the coefficients $\widehat{\lambda}_{n+1,k}$ should be  uniquely determined from  
\[
    \int_{\T} \frac{(\zeta+1)^{2n}}{\zeta^{n}} d \nu_{\underline{0}}(\zeta) = 2^{2n} \widehat{\lambda}_{n+1, n+1} \ + \  \sum_{k=1}^{n} \widehat{\lambda}_{n+1,k}\, \frac{(\xi_{n,k}+1)^{2n} }{\xi_{n,k}^{n}} 
\]
and
\begin{equation} \label{Eq-QR2-basis}
    \int_{\T} \frac{(\zeta+1)^{r}(\zeta-1)^{2n-r}}{\zeta^{n}} d \nu_{\underline{0}}(\zeta) =   \sum_{k=1}^{n} \widehat{\lambda}_{n+1,k}\, \frac{(\xi_{n,k}+1)^{r} (\xi_{n,k}-1)^{2n-r}}{\xi_{n,k}^{n}}, 
\end{equation} 
for $r = 0, 1, \ldots, 2n-1$.  

However, from \eqref{Eq-nu-mu-relations} and \eqref{Eq-QR-with-RealCanonicBasis}, by using the transformation  $\zeta = (x+i)/(x-i)$ we also have 
\begin{equation} \label{Eq-QR-TransFrom-RealCanonicBasis}
     \int_{\T} \frac{(\zeta+1)^{r} (\zeta-1)^{2n-r}}{\zeta^{n}} d \nu_{\underline{0}}(\zeta) = \sum_{k=1}^{n}   \omega_{n,k}\, \frac{(\xi_{n,k}+1)^{r} (\xi_{n,k}-1)^{2n-r}}{\xi_{n,k}^{n}}, 
\end{equation}
for $r = 0, 1, \ldots, 2n-1$. 

Comparing \eqref{Eq-QR-TransFrom-RealCanonicBasis} with \eqref{Eq-QR2-basis} we find  
\[
     \int_{\T} \phi(\zeta) d \nu_{\underline{0}}(\zeta) = \widehat{\lambda}_{n+1, n+1}\, \phi(1)\ + \  \sum_{k=1}^{n} \omega_{n,k}\, \phi\big(\xi_{n,k}\big), 
\]
for $\phi \in span\{z^{-n},z^{-n+1},\ldots,z^{n-1},z^{n}\}$. To obtain the explicit expression for $ \widehat{\lambda}_{n+1, n+1}$ in Theorem \ref{Thm-QuadRule-UnitCircle}, we have from \eqref{Eq-Lambda-Interpol-Form} 
\[
   \begin{array}{rl}
      \widehat{\lambda}_{n+1, n+1} =&  \Lambda_{n+1,n+1}(\nu_{\underline{0}}; \tau_{n})  \\[1ex]
      =& \dsp \frac{1}{\Psi_{n+1}^{\prime}(\nu_{\underline{0}}; \tau_{n}, 1)} \int_{\T} \frac{\Psi_{n+1}(\nu_{\underline{0}}; \tau_n, \zeta)}{\zeta - 1} \, d \nu_{\underline{0}}(\zeta).
    \end{array}
\]
Thus, from  \eqref{Eq-nu-Psi-R},   
\[
   \begin{array}{rl}
      \widehat{\lambda}_{n+1, n+1} 
      =& \dsp \frac{1}{R_{n}(1)} \int_{\T} R_{n}(\zeta)\, d \nu_{\underline{0}}(\zeta).
    \end{array}
\]
However, using  \eqref{Eq-R-to-P} one finds
\[
    R_{n}(1) = \lim_{x \to \infty} \frac{2^{n} P_{n}(x)}{(x-i)^{n}} \ \  \mbox{and} \ \ \int_{\T} R_{n}(\zeta)\, d \nu_{\underline{0}}(\zeta) = \int_{-\infty}^{\infty} \frac{2^{n}(x+i)^{n}P_{n}(x)}{(x^2+1)^{n}} d \varphi(x).
\]
Hence, from \eqref{Eq-LeadCoeff-Pn} and \eqref{Eq-Orthogonality-for-Pn} we obtain 
\[
   \widehat{\lambda}_{n+1, n+1} = \frac{(1-M_1)(1-M_2) \cdots (1-M_n)}{(1-\ell_1)(1-\ell_2)\cdots(1-\ell_n)}. 
\]
Thus, confirming the results corresponding to the  quadrature rule associated with $\nu_{\underline{0}}$ in Theorem \ref{Thm-QuadRule-UnitCircle}.  

Finally,  the results corresponding to the quadrature rule associated with $\nu_{\underline{\epsilon}}$ simply follow from \eqref{Eq-nu-Class}.  Thus, completing the proof of Theorem \ref{Thm-QuadRule-UnitCircle}.   
\end{proof}

\setcounter{equation}{0}
\section{A simple example } 
\label{Sec-Eample-simple}

Consider the polynomial given by
\begin{equation*} \label{Eq-recorrencia-Example}
P_{n+1}(x)=(x-c_{n+1})P_n(x)-d_{n+1}(x^2+1)P_{n-1}(x), \quad n \geq 1,
\end{equation*}
with $P_0(x)=1$, $P_1(x)=x-c_1$, where  $c_{n}=0$ and $d_{n+1}=1/4$,  $n \geq 1$. 

From the theory of difference equation it is easily found that 
\begin{equation} \label{Eq-ExplicitForm-Example}
 P_{n}(x)=i\big(\frac{x-i}{2}\big)^{n+1}-i\big(\frac{x+i}{2}\big)^{n+1}, \quad n \geq 0. 
\end{equation}
Hence,  
\begin{equation}  \label{Eq-ExplicitDerivative-Example}
   P_{n}^{\prime}(x)=\frac{n+1}{2}\big[i\big(\frac{x-i}{2}\big)^{n}-i\big(\frac{x+i}{2}\big)^{n}\big] = \frac{n+1}{2} P_{n-1}(x), \quad n \geq 1. 
\end{equation}
From \eqref{Eq-ExplicitForm-Example} it is also easily verified that the zeros $x_{n,k}$ of $P_{n}$ are such that 
\begin{equation}  \label{Eq-zeros-Rn-Example}
      \xi_{n,k} = \frac{x_{n,k}+i}{x_{n,k}-i} = e^{i2k\pi/(n+1)}, \quad  k =1,2, \ldots, n,
\end{equation}
from which $x_{n,k} = \cot\big(k\pi/(n+1)\big)$, $k =1,2, \ldots, n$.

We first consider the results that correspond to those given by Theorem \ref{Thm-Basics}. The sequence $\{d_{n+1}\}_{n \geq 1} = \{1/4\}_{n \geq 1}$ is known to be positive chain sequence with its minimal $\{\ell_{n+1}\}_{n \geq 0}$ and maximal $\{M_{n+1}\}_{n \geq 0}$ parameter sequences  given by 
\[
    \ell_{n+1} = \frac{n}{2n+2} \ \ \mbox{and} \ \  M_{n+1} = \frac{1}{2}, \quad n \geq 0. 
\]
Thus,
\[
    \mathpzc{S}  =   1 + \sum_{n = 2}^{\infty} \prod_{k=2}^{n} \frac{\ell_{k}}{1-\ell_{k} }  = 1 + \sum_{n = 2}^{\infty} \prod_{k=2}^{n} \frac{k-1}{k+1} = 1 + \sum_{n = 2}^{\infty}  \frac{2}{n(n+1)} = 2,
\]
and the probability measure $\mu$ that follows from the Verblunsky coefficients given by \eqref{Eq-Verblunsky-Characterization-1} is such that $\int_{\T} |\zeta-1|^{-2} d \mu(\zeta)$ exists and takes the value
\[
    \mathpzc{A}(\mu) = \frac{1}{4}(c_1^2 + 1)\mathpzc{S} = \frac{1}{2}.
\]
From \eqref{Eq-Verblunsky-Characterization-1} direct calculations show that 
\[
     \tau_{n} = 1 \quad \mbox{and} \quad \alpha_{n-1} =\alpha_{n-1}(\mu) = -\frac{1}{n+1}, \quad n \geq 1.   
\]
It is known that (see, for example, \cite[Thm.\,8]{BracSilvaRanga-AMC2016}) the associated probability measure $\mu$ is such that 
\[ 
    d\mu(\zeta) = \frac{1}{4\pi i}\frac{(1-\zeta)(\zeta-1)}{\zeta^2}d \zeta, 
\]
and hence, the value of $\mathpzc{A}(\mu)$ is confirmed. Moreover, from this we also find that the probability measure $\nu_{\underline{0}}$ is actually the Lebesgue measure given by $d \nu_{\underline{0}}(\zeta) = \frac{1}{2\pi i}\frac{1}{\zeta} d \zeta$ and further 
\[
   d \varphi(x)  =  - d \nu_{\underline{0}}\big(\frac{x+i}{x-i}\big) = \frac{1}{\pi} \frac{1}{x^2+1} dx. 
\]

Now we consider the results that correspond to those given by Theorem \ref{Thm-QuadRule-RealLine}.  First observe that 
\[
     x_{n,k}^2+1 = \frac{1}{\sin^2\big(k\pi/(n+1)\big)}.
\]
Furthermore, from \eqref{Eq-ExplicitDerivative-Example}, 
\[
   \frac{2}{n+1} P_{n}^{\prime}(x_{n,k}) =  P_{n-1}(x_{n,k}) =  \frac{2\sin\big(nk\pi/(n+1)\big)}{\left[2\sin\big(k\pi/(n+1)\big)\right]^{n}} =  \frac{(-1)^{k-1}}{\left[2\sin\big(k\pi/(n+1)\big)\right]^{n-1}},
\]
for $k = 1,2, \ldots, n$. Hence, from \eqref{Eq1-omega-nk} we have $\omega_{n,k} = 1/(n+1)$ and there follows
\begin{equation}  \label{Eq-QR-RL-Example}
  \frac{1}{\pi} \int_{-\infty}^{+\infty}f(x)\,\frac{1}{x^2+1} dx = \frac{1}{n+1}  \sum_{k=1}^{n} f(x_{n,k}), 
\end{equation} 
which holds whenever $(x^2+1)^nf(x)\in \mathbb{P}_{2n-1}$. 

\begin{remark}
{   The polynomials $Q_{n}(x)=Q_{n}^{(r)}(x)$ defined by \eqref{Eq-Qnr-Polynomials} also play a prominent role in the understanding of the quadrature rule given by Theorem \ref{Thm-QuadRule-RealLine}.  With the values of $\{c_{n}\}_{n \geq 1}$ and $\{d_{n+1}\}_{n \geq 1}$  chosen here we obtain from the three term recurrence relation \eqref{Eq-TTRR-Special-R2-type-for-Qn} that $Q_{n}(x) = M_1 P_{n-1}(x)$.  Thus, $\omega_{n,k} = 1/(n+1)$ also immediately follows from \eqref{Eq4-omega-nk} and \eqref{Eq-ExplicitDerivative-Example}. 
}
\end{remark}

Finally, we can now state the results that correspond to those given by  Theorem \ref{Thm-QuadRule-UnitCircle}. First we immediately have for any $\mathcal{F}(z) \in span\{z^{-n+1},z^{-n+2},\ldots,z^{n-2},z^{n-1}\}$, 
\begin{equation*} \label{Eq-QR-UC-mu-Example}
   \int_{\T} \mathcal{F}(\zeta)\, d\mu(\zeta) =   \frac{1}{4\pi i} \int_{\T} \mathcal{F}(\zeta)\, \frac{(1-\zeta)(\zeta-1)}{\zeta^2}d \zeta = \sum_{k=1}^{n} \frac{2\sin^2\big(k\pi/(n+1)\big)}{n+1} \, \mathcal{F}(\xi_{n,k}),
\end{equation*}
where $\xi_{n,k}$ are as in \eqref{Eq-zeros-Rn-Example}. 

Observe that one can easily verify that 
\[
   \widehat{\lambda}_{n+1, n+1} = \frac{(1-M_1)(1-M_2) \cdots (1-M_n)}{(1-\ell_1)(1-\ell_2)\cdots(1-\ell_n)} = \frac{1}{n+1}. 
\]
Hence, if $\nu$ is such that 
\begin{equation*} \label{Eq-nu-Class-Example}
      \int_{\T} \phi(\zeta)\,d\nu(\zeta) = \frac{1-\epsilon}{2\pi i}\int_{\T} \phi(\zeta)\,\frac{1}{\zeta} d \zeta + \epsilon\, \phi(1),
\end{equation*}
then for any $\mathcal{F}(z) \in span\{z^{-n},z^{-n+1},\ldots,z^{n-1},z^{n}\}$,
\begin{equation*} \label{Eq-QR-UC-nu-Example-Temp1}
      \int_{\T} \mathcal{F}(\zeta)\, d \nu(\zeta) =  [(1-\epsilon) \frac{1}{n+1} + \epsilon]\, \mathcal{F}(1) + \sum_{k=1}^{n} (1-\epsilon)\frac{1}{n+1} \, \mathcal{F}(\xi_{n,k}).
\end{equation*}
With $\epsilon = 0$ this leads to the well known Gauss-Lebesgue quadrature rule on the unit circle
\begin{equation*} \label{Eq-QR-UC-nu-Example-Temp2}
     \frac{1}{2\pi i} \int_{\T} \mathcal{F}(\zeta)\, \frac{1}{\zeta} d \zeta =  \frac{1}{n+1}  \sum_{k=1}^{n+1}  \mathcal{F}(e^{i2k\pi/(n+1)}) ,
\end{equation*}
valid for  $\mathcal{F}(z) \in span\{z^{-n},z^{-n+1},\ldots,z^{n-1},z^{n}\}$.

\setcounter{equation}{0}
\section{Numerical evaluation of quadrature nodes and weights} \label{Sec-NumEval-NodesWeights}

We now look into some methods for generating the nodes and weights of the quadrature rules given by Theorems \ref{Thm-QuadRule-RealLine} and \ref{Thm-QuadRule-UnitCircle}.  We may consider the respective quadrature rules as companion quadrature rules. That is, if we know the values of the nodes and weights of one of the quadrature rules we also know the corresponding values for the other quadrature rules. 

The two quadrature rules given by Theorem \ref{Thm-QuadRule-UnitCircle} are special cases of interpolatory quadrature rules on the unit circle based on the zeros of para-orthogonal polynomials (see, for example, \cite{Bultheel-MJC-RCB-JCAM2015} and references therein).  We say that they are special cases of such quadrature rules because, in the case of the $n$ - point quadrature rule \eqref{Eq-QR-UC-mu}, the para-orthogonal polynomial involved is $\Psi_{n}(\mu; -\tau_{n}, z) = const \times R_{n}(z)$. Similarly,  in the case of the $n+1$ - point quadrature rule \eqref{Eq-QR-UC-nu}, the para-orthogonal polynomial involved is  $\Psi_{n+1}\big(\nu_{\underline{\epsilon}}; \tau_{n}, z\big) = const \times (z-1) R_{n}(z)$.  One important advantage that we can point out  here is that  $R_{n}$ satisfy the three term recurrence relation  \eqref{Eq-TTRR-Special-ParaOrth-polynomials}. Thus, the  required para-orthogonal polynomials are easily obtained.  

We also recall that the coefficients $\{c_{n}\}_{n \geq 1}$ and $\{d_{n+1}\}_{n \geq 1}$ in \eqref{Eq-TTRR-Special-ParaOrth-polynomials} are exactly those which appear in  Theorem \ref{Thm-Basics} and \eqref{Eq-Verblunsky-Characterization-22}, and how to recover them from the Verblunsky coefficients $\{\alpha_{n}(\mu)\}_{n \geq 0}$ or the Verblunsky coefficients $\{\alpha_{n}(\nu_{\underline{\epsilon}})\}_{n \geq 0}$ can be found, respectively, in Theorem \ref{Thm-Inv-Map-1} and in Theorem \ref{Thm-Inv-Map-2}.   

We may consider methods that are traditionally used to evaluate the nodes  $\xi_{n,k}$ and the corresponding weights (see, for example,  \cite{AmmarCalvettiReichel1996},   \cite{AmmarGraggReichel1988}, \cite{Bultheel-MJC-RCB-JCAM2015}, \cite{Cant-Barro-Vera-2008} and \cite{Watkins-1993}). For these methods the starting point are the eigenvalue problems obtained from a well known unitary modifications of the CMV matrices, respectively,  of degrees $n$ associated with the Verblunsky coefficients $\{\alpha_{n}(\mu)\}_{n \geq 0}$ and of degree $n+1$ associated with the Verblunsky coefficients $\{\alpha_{n}(\nu_{\underline{\epsilon}})\}_{n \geq 0}$.  One draw back in such methods is that one needs the use of complex arithmetic. We will now propose two  methods that require only the use of real arithmetic.

The nodes $x_{n,k}$ of the  quadrature rule given by Theorem \ref{Thm-QuadRule-RealLine} are also the zeros  of $P_{n}$, which  are again  the eigenvalues of the generalized eigenvalue problem given by \eqref{Eq-Special-GEV-problem}. We will  consider these nodes to be arranged as in \eqref{Eq-zero-arrangement}. That is, $x_{n,k+1} < x_{n,k}$, $k =1,2 \ldots, n-1$.  
We will now consider the techniques how these nodes and the corresponding weights in the quadrature rules  can be estimated. The first of these techniques is based on the Laguerre's root finding method. 

\subsection{Laguerre's root finding method (LRF method)}

Use of the Laguerre's root finding method (LRF method) for determining the eigenvalues of real symmetric tridiagonal eigenvalue problems has already been discussed in \cite{WILKINSON-Book} and its efficiency has been proven in \cite{Li-Li-Zeng-NA1994}. The method requires the evaluation of the characteristic polynomial and its first and second derivatives, which can be obtained from the recurrence relations satisfied by these polynomials. For an efficient implementation of the method, a nice way of using these recurrence relations is also considered in \cite{Li-Li-Zeng-NA1994}.

We now explore the method of finding the nodes of the quadrature rule in Theorem \ref{Thm-QuadRule-RealLine} by the LRF method.  Observe that in our case one of the matrices involved in the generalized eigenvalue problem is not a real symmetric matrix, but a complex Hermitian matrix. However,  the characteristic polynomial $P_{n}$ is still real and can be obtained from the very nice three term recurrence relation \eqref{Eq-TTRR-Special-R2-type}.  Moreover, the sequence $\{P_{n}\}_{n \geq 0}$ obtained from this three term recurrence relation has also turned out to be a Sturm sequence (see \cite{Rahman-Schmeisser-2002} and \cite{Schwarz-Stiefel-Rutishauser-1973}), which helps to isolate the zeros of $P_{n}$ within small intervals. 

A direct usage of the LRF method will be as follows:

Let  $y_{0}$ be taken within the interval  $(x_{n,k+1}, x_{n,k})$. Then the iterative process 
\[ 
  \begin{array}{rl}
    y_{j+1} \!\! \! &= L_{+}(y_{j}) \\[0ex]
       &\dsp= y_{j} + \frac{nP_{n}(y_{j})}{-P_{n}^{\prime}(y_{j}) + sign(P_{n}(y_{j}))\sqrt{[(n-1)P_{n}^{\prime}(y_{j})]^2 - n(n-1) P_{n}(y_{j}) P_{n}^{\prime\prime}(y_{j})}},
  \end{array}
\]  
for $j =0,1, \ldots$, converges to $x_{n,k}$. Likewise, the iterative process 
\[ 
  \begin{array}{rl}
    y_{j+1} \!\! \! &= L_{-}(y_{j}) \\[0ex]
      \dsp &\dsp = y_{j} + \frac{nP_{n}(y_{j})}{-P_{n}^{\prime}(y_{j}) - sign(P_{n}(y_{j}))\sqrt{[(n-1)P_{n}^{\prime}(y_{j})]^2 - n(n-1)P_{n}(y_{j})P_{n}^{\prime\prime}(y_{j})}},
  \end{array}
\]  
for $j =0,1, \ldots$, converges to $x_{n,k+1}$. The values of $P_{n}(y_{j})$, $P_{n}^{\prime}(y_{j})$ and $P_{n}^{\prime\prime}(y_{j})$ can be evaluated from  the three term recurrence relation \eqref{Eq-TTRR-Special-R2-type}. 

However, in order to reduce any underflow or overflow problems, we will consider the following modified recurrence relations which are easily obtained  from \eqref{Eq-TTRR-Special-R2-type}:  
\begin{equation} \label{Eq-RecFor-XnYnZn}
  \begin{array}{rl}
      X_{m+1}(x)\!\!\!\!&\dsp = \frac{x-c_{m+1}}{\sqrt{x^2+1}} X_{m}(x) - d_{m+1} X_{m-1}(x), \\[3ex]
      Y_{m+1}(x)\!\!\!\! &\dsp = \frac{x-c_{m+1}}{\sqrt{x^2+1}} Y_{m}(x) - d_{m+1} Y_{m-1}(x) + X_{m}(x) - \frac{2x d_{m+1}}{\sqrt{x^2+1}}X_{m-1}(x), \\[3ex]
      Z_{m+1}(x)\!\!\!\! &\dsp = \frac{x-c_{m+1}}{\sqrt{x^2+1}} Z_{m}(x) - d_{m+1} Z_{m-1}(x) \\[1.5ex]
      & \dsp \hspace{15ex} + \ 2 Y_{m}(x) -\frac{4x d_{m+1}}{\sqrt{x^2+1}}Y_{m-1}(x) -2 d_{m+1} X_{m-1}(x), 
  \end{array}
\end{equation}
for $m = 1,2, \ldots n-1$, where 
\begin{equation} \label{Eq-Pn-to-XnYnZn}
 X_{m}(x) = \frac{P_{m}(x)}{(x^2+1)^{m/2}}, \ \ Y_{m}(x) = \frac{P_{m}^{\prime}(x)}{(x^2+1)^{(m-1)/2}} \ \ \mbox{and} \ \ Z_{m}(x) = \frac{P_{m}^{\prime\prime}(x)}{(x^2+1)^{(m-2)/2}}. 
\end{equation}

In terms of the above modified functions and also with $\mathfrak{S}_{n,k} = sign(X_{n}(y_{k}))$ the LRF method becomes:  
\begin{equation} \label{Eq-IncreasingLaguerre}
  \begin{array}{l}
    y_{j+1} 
       \dsp = y_{j} + \frac{nX_{n}(y_{j})\sqrt{y_{j}^2+1}}{-Y_{n}(y_{j}) + \mathfrak{S}_{n,k}\sqrt{[(n-1)Y_n(y_{j})]^2 - n(n-1)X_{n}(y_{j})Z_{n}(y_{j})}},
  \end{array}
\end{equation}  
for $j =0,1, \ldots$, which converges to $x_{n,k}$ if $y_0$ is chosen between $x_{n,k+1}$ and $x_{n,k}$;  
\begin{equation} \label{Eq-DecreasingLaguerre}
  \begin{array}{l}
    y_{j+1}  
      \dsp = y_{j} + \frac{nX_{n}(y_{j})\sqrt{y_{j}^2+1}}{-Y_{n}(y_{j}) - \mathfrak{S}_{n,k}\sqrt{[(n-1)Y_n(y_{j})]^2 - n(n-1)X_{n}(y_{j})Z_{n}(y_{j})}},
  \end{array}
\end{equation}  
for $j =0,1, \ldots$, which converges to $x_{n,k}$ if $y_0$ is chosen between $x_{n,k}$ and $x_{n,k-1}$.

In order to have a systematic way to approximate  all the zeros of $P_{n}$, first we determine the total number of positive zeros and the total number of negative zeros of $P_{n}$ by {\em the method of Sturm sequence} (see, for example, \cite{Rahman-Schmeisser-2002} and \cite{Schwarz-Stiefel-Rutishauser-1973}). This follows from the number of sign changes within the sequence $\{P_{m}(0)\}_{m=0}^{n}$, or equivalently, within the sequence $\{X_{m}(0)\}_{m=0}^{n}$. Then the approximations for the positive zeros are obtained, in an increasing order,  by \eqref{Eq-IncreasingLaguerre}. Similarly, the  approximations for the   negative zeros are obtained, in a decreasing order, by \eqref{Eq-DecreasingLaguerre}. 

For an efficient convergence of the algorithm given by \eqref{Eq-IncreasingLaguerre} and \eqref{Eq-DecreasingLaguerre}, it is also important the choice of the initial value $y_{0}$ that lead to the convergence of these algorithms  to  a particular $x_{n,k}$. Thus, for example in the case of \eqref{Eq-IncreasingLaguerre}, having determined two or more of the positive zeros nearest to the origin, to determine the next positive zero, say $x_{n,k}$, we use as initial value $y_{0} = x_{n,k+1} + (x_{n,k+1} - x_{n,k+2})\times \delta$.  Clearly, to expect a better convergence it is important that we choose the positive value of $\delta$ so that  $y_{0}$ remains between $x_{n,k+1}$ and $x_{n,k}$, but also much closer to $x_{n,k}$ than $x_{n,k+1}$. With a particular choice of $\delta$ if $y_{0}$ passes the value of $x_{n,k}$, which can be verified by the method of Sturm sequence, we can then assume a new $y_{0}$ obtained with a reduced  value of $\delta$. In general, we have observed that the distance between two consecutive zeros of those lie away from the origin is larger than  the distance between two consecutive zeros which lie closer to the origin. Thus, in most of the examples that we have considered below, choosing $\delta = 1$ seems to have been adequate and has worked very well.   

Similarly, to determine a negative zero, say $x_{n, k}$, we use as initial value in \eqref{Eq-DecreasingLaguerre}  $y_{0} = x_{n,k-1} - (x_{n,k-2} - x_{n,k-1})\times \delta$, with $\delta$ chosen using the same idea as before. 

Having found the required approximate value of a zero $x_{n,k}$, the corresponding approximate value of the weights $\omega_{n,k}$ can be obtained from
\begin{equation} \label{Eq22-omega-nk}
    \omega_{n,k} =  \frac{d_2d_3\cdots d_nM_1}{Y_n(x_{n,k})X_{n-1}(x_{n,k})}, \quad k = 1,2, \ldots, n. 
\end{equation}
This follows from \eqref{Eq1-omega-nk} and \eqref{Eq-Pn-to-XnYnZn}.
Observe that the values of $Y_n(x_{n,k})$ and $X_{n-1}(x_{n,k})$ are readily available from the final stage of convergence to $x_{n,k}$ in LRF method. 

As follows from Theorem \ref{Thm-Basics}, the value of $M_1$ can be obtained from the formulas 
\[
    M_1 = (c_1^2+1)\int_{-\infty}^{\infty} \frac{1}{x^2 +1}\,d\varphi(x) = (c_1^2+1)\int_{\T} \frac{(\zeta-1)^2}{\zeta^2}\,d\nu_{\underline{0}}(\zeta),
\]
where the probability measure $\nu_{\underline{0}}$ on the unit circle is such that $d \nu_{\underline{0}}\big((x+i)/(x-i)\big) = -d \varphi(x)$. If we know the Verblunsky coefficients associated with $\nu_{\underline{0}}$  then from Theorem \ref{Thm-Inv-Map-2},  
\[
    M_1 = \frac{1}{2}\, \frac{\big|1 - \alpha_{0}(\nu_{\underline{0}})\big|^2}{\Re\big(1 -\alpha_{0}(\nu_{\underline{0}})\big)}. 
\]

The LRF method is known to have a cubic convergence rate.  Perhaps the drawbacks one could see in using the LRF method are:

\noindent - square root evaluations, which are more time consuming; 
  
\noindent - evaluations of the quantities $X_{n}$, $Y_n$ and $Z_{n}$ by the recursive formulas \eqref{Eq-RecFor-XnYnZn}, which needs careful considerations. 

For informations concerning this latter comment see the paper \cite{Gautschi-SIAMRev-1967} by Gautschi. However, in all the numerical experiments that we have performed, we have not encountered any drawbacks in using the recursive formulas given by \eqref{Eq-RecFor-XnYnZn}.     

\subsection{Inverse power method (IP method)}

We now discuss an alternative way to determine the zeros $x_{n,k}$ of $P_{n}$.  Since these zeros are the eigenvalues of the generalized eigenvalue problem \eqref{Eq-Special-GEV-problem}, we will see how the zero $x_{n,k}$  can be determined from an initial approximation $p$ to this zero  and then the use  of the inverse power method (IP method). The inverse power method is known to be a powerful tool for determining the eigenvectors (see \cite{WILKINSON-Book}). 

In the case of our generalized eigenvalue problem the inverse power method can be stated as
\[  
  \begin{array}{l}
     (\mathbf{A}_n - p \mathbf{B}_n) \mathbf{w}_{n}\bl j \br =\mathbf{B}_n \mathbf{u}_{n}\bl j \br , \\[2ex]
     \dsp \epsilon\bl j \br  = \frac{\mathbf{u}_{n}\bl j \br ^{H}\mathbf{u}_{n}\bl j \br }{\mathbf{u}_{n}\bl j \br ^{H}\mathbf{w}_{n}\bl j \br }, \\[2ex]
     \dsp \mathbf{u}_{n}[j+1] = \gamma\bl j \br \, \mathbf{w}_{n}\bl j \br ,     
  \end{array} \quad j = 0, 1, \ldots ,
\]
where $\mathbf{A}_n$ and $\mathbf{B}_{n}$ are as in \eqref{Eq-Special-GEV-problem}. Here $\gamma\bl j \br $ are  normalization constants.  If $p$ is chosen close enough to $x_{n,k}$  (but not equal to) then $\{\epsilon\bl j \br \}_{j \geq 0}$ converges to $x_{n,k} - p$. 

From \eqref{Eq-Rationals-1}, since the eigenvector 
\[
   \mathbf{u}_{n} = \mathbf{u}_{n}(x_{n,k}) = \big[u_{n,0}(x_{n,k}), u_{n,1}(x_{n,k}), \ldots, u_{n,n-1}(x_{n,k})\big]^{T}
\]
can always be chosen such that the element $u_{n,0}(x_{n,k}) = P_{0}(x_{n,k}) = 1$, we will assume that 
\[
    \mathbf{u}_{n}\bl j \br  = \big[u_{n,0}\bl j \br , u_{n,1}\bl j \br , \ldots, u_{n,n-1}\bl j \br \big]^{T}
\]
is such that  $u_{n,0}\bl j \br  = 1$. Thus, if we set 
\[
    \mathbf{w}_{n}\bl j \br  = \big[w_{n,0}\bl j \br , w_{n,1}\bl j \br , \ldots, w_{n,n-1}\bl j \br \big]^{T}
\]
we must take $\gamma\bl j \br  = 1/w_{n,0}\bl j \br $. With this normalization constant, we also find that $\{\mathbf{u}_{n}\bl j \br \}_{j \geq 0}$ converges to the eigenvector $\mathbf{u}_{n}$ and $\{\gamma\bl j \br \}_{j \geq 0}$ converges to $x_{n,k} - p$.  

However, to determine the value of $\mathbf{w}_{n}\bl j \br $ from $\mathbf{B}_n \mathbf{u}_{n}\bl j \br $, we make  use of LU decomposition. As the matrix $\mathbf{A}_n$  and the eigenvectors $\mathbf{u}_{n}(x_{n,k})$ are complex, one may also expect the necessity for the use of complex arithmetic. However, because of the easy structure of the matrices $\mathbf{A}_n$ and $\mathbf{B}_n$ we can easily perform all the operations with real arithmetic. 

It turned out, by considering $\mathbf{A}_n - p \mathbf{B}_n = \mathbf{L}_{n}\mathbf{U}_{n}$,    where 
\[
    \mathbf{L}_n= \left[ 
       \begin{array}{cccccc}
        1  &  0 & 0  &  \cdots & 0 & 0  \\[1ex]
        l_{n,0}  & 1  & 0 &  \cdots & 0 & 0 \\[1ex]
         0  & l_{n,1}  & 1 &  \cdots & 0 & 0  \\[1ex]
         \vdots  & \vdots  & \vdots &    & \vdots & \vdots \\[1ex]
          0  & 0  & 0 &  \cdots & 1 &  0  \\[1ex]
          0  & 0  & 0 &  \cdots & l_{n,n-2} & 1 
      \end{array} \! \right], 
      \quad 
       \mathbf{U}_n = \left[ 
       \begin{array}{cccccc}
        \mathfrak{r}_{n,0}  &  t_{n,0} & 0  &  \cdots & 0 & 0  \\[1ex]
        0  & \mathfrak{r}_{n,1}  & t_{n,1} &  \cdots & 0 & 0 \\[1ex]
         0  & 0  & \mathfrak{r}_{n,2} &  \cdots & 0 & 0  \\[1ex]
         \vdots  & \vdots  & \vdots &    & \vdots & \vdots \\[1ex]
          0  & 0  & 0 &  \cdots & \mathfrak{r}_{n,n-2} &  t_{n,n-2}  \\[1ex]
          0  & 0  & 0 &  \cdots & 0 & \mathfrak{r}_{n,n-1} 
      \end{array} \! \right],       
\]
 that  the elements $l_{n,m}$ and $t_{n,m}$ are complex and the elements $\mathfrak{r}_{n,m}$ are real. Precisely, by setting $l_{n,m} = l_{n,m}^{(1)} + i\, l_{n,m}^{(2)}$ and $\mathfrak{t}_{n,m} = \mathfrak{t}_{n,m}^{(1)} + i\, \mathfrak{t}_{n,m}^{(2)}$, we can state the following.

\begin{theorem} The elements of the matrices $\mathbf{L}_{n}$ and $\mathbf{U}_{n}$ satisfy 
\[
  \begin{array}{l}
     \dsp \ \mathfrak{r}_{n,m} = - \frac{P_{m+1}(p)}{P_{m}(p)}, \quad m = 0, 1, \ldots, n-1,  \\[3ex]
     \begin{array} {ll}
       t_{n,m}^{(1)} = -p\sqrt{d_{m+2}}, &  \ t_{n,m}^{(2)} = \sqrt{d_{m+2}},  \\[2ex]
       \dsp l_{n,m}^{(1)} = \frac{t_{n,m}^{(1)}}{\mathfrak{r}_{n,m}}, & \dsp \ l_{n,m}^{(2)} = \frac{t_{n,m}^{(2)}}{\mathfrak{r}_{n,m}}, \\[2ex]
      \end{array}  m = 0, 1, \ldots, n-2.
  \end{array}
\]
\end{theorem}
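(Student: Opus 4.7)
The plan is to compute the LU factorization of the tridiagonal matrix $\mathbf{A}_n - p\mathbf{B}_n$ by hand, band by band, and then to recognise the diagonal entries $\mathfrak{r}_{n,m}$ as ratios of consecutive values $P_{m+1}(p)$ and $P_{m}(p)$ via the $R_{II}$-type recurrence \eqref{Eq-TTRR-Special-R2-type}.

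From the definitions of $\mathbf{A}_n$ and $\mathbf{B}_n$, the tridiagonal matrix $\mathbf{A}_n - p\mathbf{B}_n$ has diagonal entries $c_{m+1}-p$, superdiagonal entries $(i-p)\sqrt{d_{m+2}}$, and subdiagonal entries $-(p+i)\sqrt{d_{m+2}}$ (using $0$-indexed positions as in the statement). Matching the superdiagonal of $\mathbf{L}_n\mathbf{U}_n$ with that of $\mathbf{A}_n-p\mathbf{B}_n$ yields $t_{n,m}=(i-p)\sqrt{d_{m+2}}$, whose real and imaginary parts give the stated values of $t_{n,m}^{(1)}$ and $t_{n,m}^{(2)}$. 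Matching the subdiagonal gives $l_{n,m}\mathfrak{r}_{n,m}=-(p+i)\sqrt{d_{m+2}}$, and dividing through by the real scalar $\mathfrak{r}_{n,m}$ immediately yields the formulas for $l_{n,m}^{(1)}$ and $l_{n,m}^{(2)}$.

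Matching next the diagonal entry at position $(m+1,m+1)$ of the product gives the one-step recurrence
\[
 \mathfrak{r}_{n,m+1} \;=\; (c_{m+2}-p) \;-\; l_{n,m}t_{n,m} \;=\; (c_{m+2}-p) \;-\; \frac{(p^2+1)\,d_{m+2}}{\mathfrak{r}_{n,m}},
\]
where the simplification uses $(p+i)(i-p)=-(p^2+1)$. The claimed formula $\mathfrak{r}_{n,m}=-P_{m+1}(p)/P_{m}(p)$ is then proved by induction on $m$. The base case is read off from the $(0,0)$ entry: $\mathfrak{r}_{n,0}=c_1-p=-P_1(p)/P_0(p)$. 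For the inductive step, substituting the hypothesis into the recurrence above and clearing the denominator $P_{m+1}(p)$ converts the scalar identity into
\[
 P_{m+2}(p) \;=\; (p-c_{m+2})\,P_{m+1}(p) \;-\; d_{m+2}(p^2+1)\,P_m(p),
\]
which is precisely \eqref{Eq-TTRR-Special-R2-type} evaluated at $x=p$, closing the induction.

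The computation is essentially a routine Crout reduction; the only genuine piece of content is the cancellation $(p+i)(i-p)=-(p^2+1)$, which is exactly what aligns the product $l_{n,m}t_{n,m}$ with the coefficient $d_{m+2}(p^2+1)$ appearing in the $R_{II}$ recurrence, and in turn forces every $\mathfrak{r}_{n,m}$ to be real despite $l_{n,m}$ and $t_{n,m}$ being complex. As a byproduct the argument shows that the factorization exists precisely when $P_m(p)\neq 0$ for $m=1,2,\ldots,n-1$, which is a mild genericity condition on the shift $p$ used in the inverse power iteration.
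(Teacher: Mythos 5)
Your proof is correct and follows essentially the same route as the paper: read off the $\mathbf{L}_n\mathbf{U}_n$ entries band by band, obtain the recursion $\mathfrak{r}_{n,m+1}=c_{m+2}-p-(p^2+1)d_{m+2}/\mathfrak{r}_{n,m}$ via the cancellation $(p+i)(i-p)=-(p^2+1)$, and identify $-\mathfrak{r}_{n,m}$ with $P_{m+1}(p)/P_m(p)$ through the recurrence \eqref{Eq-TTRR-Special-R2-type}. One small caveat: your subdiagonal computation actually gives $l_{n,m}^{(2)}=-\sqrt{d_{m+2}}/\mathfrak{r}_{n,m}=-t_{n,m}^{(2)}/\mathfrak{r}_{n,m}$ (as does the paper's own proof), whereas the theorem as stated has $l_{n,m}^{(2)}=+t_{n,m}^{(2)}/\mathfrak{r}_{n,m}$; this is a sign typo in the statement, so you should not claim your division ``immediately yields the formulas'' as printed, but rather note the corrected sign. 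Your closing observation that the factorization exists exactly when $P_m(p)\neq 0$ for $m=1,\ldots,n-1$ is a nice addition not made explicit in the paper.
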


\begin{proof}
It is not difficult verify from  $\mathbf{L}_{n}\mathbf{U}_{n} = \mathbf{A}_n - p \mathbf{B}_n$ that $\mathfrak{r}_{n,0} = c_1 - p$, 
\[
  \begin{array}{l}
     \mathfrak{t}_{n,m}^{(1)} = -p\sqrt{d_{m+2}}, \qquad \ \mathfrak{t}_{n,m}^{(2)} = \sqrt{d_{m+2}},  \\[2ex]
     l_{n,m}^{(1)} = -p\sqrt{d_{m+2}}/\mathfrak{r}_{n,m}, \quad l_{n,m}^{(2)} = -\sqrt{d_{m+2}}/\mathfrak{r}_{n,m}, \\[2ex]
     \mathfrak{r}_{n,m+1} = c_{m+2} - p - (p^2+1) d_{m+2}/\mathfrak{r}_{n,m},
  \end{array}
\]
for $m = 0, 1, \ldots, n-2$. Now, we verify from the three term recurrence \eqref{Eq-TTRR-Special-R2-type} that $-\mathfrak{r}_{n,m} = P_{m+1}(p)/P_{m}(p)$, $m \geq 0$. 
\end{proof}

By also setting $\widehat{\mathbf{u}}_{n}\bl j \br  = \mathbf{B}_n \mathbf{u}_{n}\bl j \br $, $\mathbf{L}_n \mathbf{v}_{n}\bl j \br  = \widehat{\mathbf{u}}_{n}\bl j \br $ and $\mathbf{U}_n \mathbf{w}_{n}\bl j \br  = \mathbf{v}_{n}\bl j \br $, where 
\[
  \begin{array}l
    \mathbf{u}_{n}\bl j \br   = \big[u_{n,0}^{(1)}\bl j \br +i\,u_{n,0}^{(2)}\bl j \br , \,u_{n,1}^{(1)}\bl j \br +i\,u_{n,1}^{(2)}\bl j \br , \ldots,\, u_{n,n-1}^{(1)}\bl j \br +i\,u_{n,n-1}^{(2)}\bl j \br \big]^{T}, \\[1.5ex]
    \widehat{\mathbf{u}}_{n}\bl j \br   = \big[\widehat{u}_{n,0}^{(1)}\bl j \br +i\,\widehat{u}_{n,0}^{(2)}\bl j \br , \,\widehat{u}_{n,1}^{(1)}\bl j \br +i\,\widehat{u}_{n,1}^{(2)}\bl j \br , \ldots,\, \widehat{u}_{n,n-1}^{(1)}\bl j \br +i\,\widehat{u}_{n,n-1}^{(2)}\bl j \br \big]^{T}, \\[1.5ex]
    \mathbf{v}_{n}\bl j \br   = \big[v_{n,0}^{(1)}\bl j \br +i\,v_{n,0}^{(2)}\bl j \br , \,v_{n,1}^{(1)}\bl j \br +i\,v_{n,1}^{(2)}\bl j \br , \ldots,\, v_{n,n-1}^{(1)}\bl j \br +i\,v_{n,n-1}^{(2)}\bl j \br \big]^{T}, \\[1.5ex]
    \mathbf{w}_{n}\bl j \br   = \big[w_{n,0}^{(1)}\bl j \br +i\,w_{n,0}^{(2)}\bl j \br , \,w_{n,1}^{(1)}\bl j \br +i\,w_{n,1}^{(2)}\bl j \br , \ldots,\, w_{n,n-1}^{(1)}\bl j \br +i\,w_{n,n-1}^{(2)}\bl j \br \big]^{T}, 
  \end{array}
\] 
one finds: 
\[
   \widehat{u}_{n,0}^{(1)}\bl j \br  = u_{n,0}^{(1)}\bl j \br  + \sqrt{d_{2}}\,u_{n,1}^{(1)}\bl j \br , \quad \widehat{u}_{n,0}^{(2)}\bl j \br  = u_{n,0}^{(2)}\bl j \br  + \sqrt{d_{2}}\,u_{n,1}^{(2)}\bl j \br ,
\]
\[
  \begin{array}{l}
    \widehat{u}_{n,m}^{(1)}\bl j \br  = \sqrt{d_{m+1}}\,u_{n,m-1}^{(1)}\bl j \br  +u_{n,m}^{(1)}\bl j \br  + \sqrt{d_{m+2}}\,u_{n,m+1}^{(1)}\bl j \br , \\[1.5ex]
    \widehat{u}_{n,m}^{(2)}\bl j \br  = \sqrt{d_{m+1}}\,u_{n,m-1}^{(2)}\bl j \br  +u_{n,m}^{(2)}\bl j \br  + \sqrt{d_{m+2}}\,u_{n,m+1}^{(2)}\bl j \br , 
  \end{array} \ \ m =1,2, \ldots, n-2,
\]
\[
   \widehat{u}_{n,n-1}^{(1)}\bl j \br  = \sqrt{d_{n}}\,u_{n,n-2}^{(1)}\bl j \br  +u_{n,n-1}^{(1)}\bl j \br ,  \quad \widehat{u}_{n,n-1}^{(2)}\bl j \br  = \sqrt{d_{n}}\,u_{n,n-2}^{(2)}\bl j \br  +u_{n,n-1}^{(2)}\bl j \br ;
\]
\vspace{0.5ex}
\[
   v_{n,0}^{(1)}\bl j \br  = \widehat{u}_{n,0}^{(1)}\bl j \br , \quad v_{n,0}^{(2)}\bl j \br  = \widehat{u}_{n,0}^{(2)}\bl j \br ,
\]
\[ 
   \begin{array}{l}
     v_{n,m}^{(1)}\bl j \br = \widehat{u}_{n,m}^{(1)}\bl j \br  - l_{n,m-1}^{(1)}v_{n,m-1}^{(1)}\bl j \br  + l_{n,m-1}^{(2)}v_{n,m-1}^{(2)}\bl j \br , \\[1.5ex]
     v_{n,m}^{(2)}\bl j \br = \widehat{u}_{n,m}^{(2)}\bl j \br  - l_{n,m-1}^{(2)}v_{n,m-1}^{(1)}\bl j \br  - l_{n,m-1}^{(1)}v_{n,m-1}^{(2)}\bl j \br , 
   \end{array} \ \ m = 1,2, \ldots, n-1; 
\]
\vspace{1.5ex}
\[
   w_{n,n-1}^{(1)}\bl j \br  = v_{n,n-1}^{(1)}\bl j \br /\mathfrak{r}_{n,n-1}, \quad w_{n,n-1}^{(2)}\bl j \br  = v_{n,n-1}^{(2)}\bl j \br /\mathfrak{r}_{n,n-1}, 
\]
\[ 
   \begin{array}{l}
     w_{n,m}^{(1)}\bl j \br = \big(v_{n,m}^{(1)}\bl j \br  - \mathfrak{t}_{n,m}^{(1)}w_{n,m+1}^{(1)}\bl j \br  + \mathfrak{t}_{n,m}^{(2)}w_{n,m+1}^{(2)}\bl j \br \big)/\mathfrak{r}_{n,m}, \\[1ex]
     w_{n,m}^{(2)}\bl j \br = \big(v_{n,m}^{(2)}\bl j \br  - \mathfrak{t}_{n,m}^{(2)}w_{n,m+1}^{(1)}\bl j \br  - \mathfrak{t}_{n,m}^{(1)} w_{n,m+1}^{(2)}\bl j \br \big)/\mathfrak{r}_{n,m}, 
   \end{array} \ \ m = n-2,n-3, \ldots, 0.
\]
Finally, $u_{n,0}^{(1)}\bl j+1 \br = 1$, $u_{n,0}^{(2)}\bl j+1 \br = 0$ and  
\[
  \begin{array}l
    \Re(\gamma\bl j \br ) = \big(w_{n,0}^{(1)}\bl j \br \big)/\big((w_{n,0}^{(1)}\bl j \br )^2+ (w_{n,0}^{(2)}\bl j \br )^2\big), \\[1ex]
    u_{n,m}^{(1)}\bl j+1\br = \big(w_{n,m}^{(1)}\bl j \br w_{n,0}^{(1)}\bl j \br  + w_{n,m}^{(2)}\bl j \br w_{n,0}^{(2)}\bl j \br \big)/\big((w_{n,0}^{(1)}\bl j \br )^2+ (w_{n,0}^{(2)}\bl j \br )^2\big), \\[1ex]
    u_{n,m}^{(2)}\bl j+1\br = \big(w_{n,m}^{(2)}\bl j \br w_{n,0}^{(1)}\bl j \br  - w_{n,m}^{(1)}\bl j \br w_{n,0}^{(2)}\bl j \br \big)/\big((w_{n,0}^{(1)}\bl j \br )^2+ (w_{n,0}^{(2)}\bl j \br )^2\big), 
  \end{array}
\]
for $m = 1,2, \ldots, n-1$. 

Since the eigenvalues are real and simple, one has (see, for example, \cite{WILKINSON-Book})
\[
    \lim_{j \to \infty} \Re(\gamma\bl j \br ) = x_{n,k} - p \quad \mbox{and} \quad \lim_{j \to \infty} \frac{M_1}{ \mathbf{u}_{n}\bl j \br ^{H}  \widehat{\mathbf{u}}_{n}\bl j \br } \to \omega_{n,k}.
\]
The latter expression follows from \eqref{Eq1-omega-nk}. 

Once we start with the values of $\sqrt{d_{n+1}}$ and $P_{n}(p)$, $n \geq 1$, this method no longer requires the evaluations of square roots.  However, the convergence in this case is linear, in contrast with the LRF method, which is known to be cubic. The closer the value of $p$ to the zero $x_{n,k}$ the faster the convergence. However, $p$ can not be too close to $x_{n,k}$, as  in this case  the system $\mathbf{A}_n - p \mathbf{B}_n$ becomes more ill conditioned.  

In our applications of the IP method in one of the examples stated below, we have used the LRF method to obtain the initial approximation $p$.

\subsection{Numerical examples} \label{subSec-NumEx}

We will consider the numerical evaluation of the nodes and weight of the quadrature rule given by Theorem \ref{Thm-QuadRule-RealLine} for two specific values of $\varphi$. The starting point for our numerical calculations are the associated sequences $\{c_{n}\}_{n \geq 1}$ and $\{d_{n+1}\}_{n \geq 1}$.  All the arithmetic is performed in double precision with the use of the programming language {\em Python}.

\begin{example} \label{Eg-1}
{\em We first look at the numerical evaluation of the nodes and weights of the $n$ - point quadrature rule given by Theorem \ref{Thm-QuadRule-RealLine}, in the case of the sequences  $\{c_{n}\}_{n \geq 1}$ and $\{d_{n+1}\}_{n \geq 1}$ considered in Section \ref{Sec-Eample-simple}.  
}
\end{example} 
The associated probability measure $\nu_{\underline{0}}$ is the Lebesgue measure. 
We also have $d \varphi(x) = 1/[\pi(x^2+1)]$ and $\int_{-\infty}^{\infty} d \varphi(x)  = 1$.
This is a nice test example since the nodes $x_{n,k}$ and the weights $\omega_{n,k}$ are explicitly given as in \eqref{Eq-zeros-Rn-Example} and \eqref{Eq-QR-RL-Example}. 

In Table\,\ref{table1}  we have given the results that we obtain for the nodes of the $15$-point quadrature rule using the LRF method.  Since the nodes (i.e., zeros of $P_{15}$) are symmetric about the origin, we evaluate only the positive zeros using \eqref{Eq-IncreasingLaguerre}.  As we have mentioned earlier, these positive zeros  are evaluated in an increasing order of magnitude. Clearly, $x_{15,8} = 0$ and the first zero that we need to evaluate  is  $x_{15,7}$. Accept for the initial approximation $y_0 = 0.1$ to arrive at the value of $x_{15,7}$, for any of the remaining positive zeros $x_{15,k}$, $k = 6, 5, \ldots, 1$, the initial approximation $y_0$ is taken to be $x_{n,k+1} + (x_{n,k+1} - x_{n,k+2})$. Also with  each of these $x_{15,k}$ the process of determining its approximations of $y_j$, $j=0, 1, \ldots$,  from  \eqref{Eq-IncreasingLaguerre} is repeated until the difference between two successive approximations $y_{j}$ and  $y_{j-1}$ becomes less than $10^{-10}$.    The third column of Table\,\ref{table1} gives the number of iterations $j$ (i.e., the value of $j$ when $|y_{j}- y_{j-1}|$ becomes smaller than $10^{-10}$). 

\begin{table}[H]
\centering
\begin{tabular}{c|c|c|c|c}
\hline
& $y_0$   & \ \ $j$ \ \  & $y_j$ & zeros with 15 digits \\ 
\hline
\ $x_{15,7}$ \ & \hspace{1ex} $0.1000000000$ \hspace{1ex} & $5$ & \hspace{1ex} $0.1989123673$ \hspace{1ex} & \ \hspace{1ex} $      0.19891236737966\ldots$ \hspace{1ex} \\
\hline
$x_{15,6}$ & $0.3978247347$ & $3$ & $0.4142135623$ & \ $         0.41421356237310\ldots$\\
\hline
$x_{15,5}$ & $0.6295147573$ & $4$ & $0.6681786379$ & \ $         0.66817863791930\ldots$\\
\hline
$x_{15,4}$ & $0.9221437134$ & $4$ & $1.0000000000$ & \ $         1.00000000000000\ldots$\\
\hline
$x_{15,3}$ & $1.3318213620$ & $4$ & $1.4966057626$ & \ $         1.49660576266549\ldots$\\
\hline
$x_{15,2}$ & $1.9932115253$ & $4$ & $2.4142135623$ & \ $         2.41421356237309\ldots$\\
\hline
$x_{15,1}$ & $3.3318213620$ & $5$ & $5.0273394921$ & \ $         5.02733949212585\ldots$\\
\hline
\end{tabular}
\caption{Positive zeros by using LRF method for Example \ref{Eg-1}, with $n =15$, where $j$ is the number of iterations to achieve the precision $10^{-10}$.}
\label{table1}
\end{table}

The results given in column $4$ of Table\,\ref{table1} represent the approximate values for the zeros obtained after the specified number of iterations. Comparing with the exact values, with 15 digits, given in column $5$ of Table\,\ref{table1}, the approximations presented in column $4$ are exactly the same as the exact values for all 11 digits presented.   

\begin{table}[H]
\centering 
\begin{tabular}{c|c|c|c|c|c}
\hline
& $y_0$ & \ \ $j$ \ \ & \ \ $k$ \ \ & $y_k$ & zeros with 15 digits     \\ 
\hline
\ $x_{15,7}$ \ &\hspace{1ex}  $0.10000000000$ \hspace{1ex} & $3$ & $3$ & \hspace{1ex} $0.1989123673$ \hspace{1ex} & \hspace{1ex} $ 0.19891236737966\ldots$ \hspace{1ex}\\
\hline
$x_{15,6}$ & $0.39782473476$ & $2$ & $3$ & $0.4142135623$ & \ $         0.41421356237310\ldots$\\
\hline
$x_{15,5}$ & $0.62951475737$ & $2$ & $3$ & $0.6681786379$ & \ $         0.66817863791930\ldots$\\
\hline
$x_{15,4}$ & $0.92214371347$ & $2$ & $3$ & $1.0000000000$ & \ $         1.00000000000000\ldots$\\
\hline
$x_{15,3}$ & $1.33182136208$ & $3$ & $1$ & $1.4966057626$ & \ $         1.49660576266549\ldots$\\
\hline
$x_{15,2}$ & $1.99321152533$ & $3$ & $3$ & $2.4142135623$ & \ $         2.41421356237309\ldots$\\
\hline
$x_{15,1}$ & $3.33182136208$ & $4$ & $1$ & $5.0273394921$ & \ $         5.02733949212585\ldots$\\
\hline
\end{tabular}
\caption{Positives zeros by using hybrid LRF-IP method for Example \ref{Eg-1} with $n =15$, where $j$ is the number of iterations of LRF method to achieve the precision $10^{-2}$  and $k$ is the number of iterations of IP method to achieve the precision $10^{-10}$.}
\label{table2}
\end{table}

Results given in Table\,\ref{table2} are what we obtain with IP method with real arithmetic. For the record, with the use of complex arithmetic (not always readily available) no difference in accuracy has been observed.  To obtain an approximation for a zero  $x_{15,k}$ using the IP method, we first need the initial approximation $p$. In the results presented in Table\,\ref{table2} we arrive at this initial approximation $p$  with the use of LRF method. Instead of using  $10^{-10}$ to stop the iterations with the LRF method we have used  $10^{-2}$. We then perform the iteration with the IP method until the difference between two successive iterations becomes smaller than $10^{-10}$.  We will refer to  this technique of starting with the LRF method and finishing with the IP method {\em the hybrid LRF-IP method}.  Even though the IP method is a nice and simple alternative method and the final achievements are the same,  comparing the results obtained in Tables \ref{table1} and \ref{table2} no significant advantages in terms of convergence over the LRF method is observed, especially for the approximations of zeros closer to the origin.

\begin{example} \label{Eg-2}
{\em We now consider the numerical evaluation of the nodes and weights of the quadrature rules that follow from the three term recurrence \eqref{Eq-TTRR-Special-R2-type}, where   
\[
   c_{n} = c_{n}^{(b)} = \frac{\eta}{\lambda+n}, \quad d_{n+1} = d_{n+1}^{(b)} = \frac{1}{4} \frac{n(n+2\lambda+1)}{(n+\lambda)(n+\lambda+1)}, \quad n \geq 1. 
\]
Here, $\eta \in \mathbb{R}$ and $\lambda > -1/2$. 
}
\end{example}

The polynomials $\{P_{n}\}_{n \geq 0} = \{P_{n}^{(b)}\}_{n \geq 0}$ obtained here have been referred to as complementary Romanovski-Routh  polynomials (see \cite{AMF-LLSR-ASR-MT-PAMS2019} and references therein).  They satisfy the orthogonality given by \eqref{Eq-Orthogonality-for-Pn}, with  
\begin{equation} \label{Eq-Routh-Romanovski-measure}
   d \varphi(x) = d \varphi^{(b)}(x) = \frac{e^{\pi \Im(b)}}{2\pi}  \frac{2^{b+\bar{b}+1} |\Gamma(b+1)|^2}{\Gamma(b+\bar{b}+1)}   \frac{(e^{-{\rm arccot}\,x})^{2\Im(b)}}{(x^2+1)^{\Re(b)+1}} dx, 
\end{equation}  
where $b = \lambda + i \eta$. The probability measures on the unit circle obtained as in Theorem \ref{Thm-Basics} are 
\[
  \begin{array}{l}    
    d \nu_{\underline{0}}(e^{i \theta}) = d \nu^{(b)}(e^{i \theta}) = \frac{2^{b+\bar{b}}|\Gamma(b+1)|^2}{2\pi\Gamma(b+\bar{b}+1)}e^{(\pi-\theta)\Im(b)}[sin^2(\theta/2)]^{\Re(b)}d\theta, \\[2ex]
    d\mu(e^{i \theta}) = d \nu^{(b+1)}(e^{i \theta}).   \\[2ex]
  \end{array} 
\]
These can also be written in the following equivalent forms
\begin{equation} \label{CRR-Measures}
  \begin{array}{l}
    d \nu_{\underline{0}}(\zeta) = d \nu^{(b)}(\zeta) = \tau(b)  \zeta^{-(\bar{b}+1)}(\zeta-1)^{2\Re(b)}d\zeta \quad \mbox{and} \quad d\mu(\zeta) = d \nu^{(b+1)}(\zeta), 
  \end{array} 
\end{equation}
where 
\[
      \tau(b) = \frac{|\Gamma(b+1)|^2}{2 \pi i\Gamma(b+\bar{b}+1)}\frac{e^{\pi \Im(b)}}{(-1)^{\Re(b)}}. 
\]

The minimal $\{\ell_{n+1}^{(b)}\}_{n \geq 0}$ and maximal $\{M_{n+1}^{(b)}\}_{n \geq 0}$ parameter sequences of the positive chain sequence $\{d_{n+1}^{(b)}\}_{n \geq 1}$ are 
\[
      \ell_{n+1}^{(b)} = \frac{n}{2(n+\lambda+1)}  \quad   \mbox{and} \quad    M_{n+1}^{(b)} = \frac{n + 2\lambda + 1}{2(n+\lambda+1)},  \quad n \geq 0 .
\]
For the above measure $\nu_{\underline{0}} = \nu^{(b)}$ if we consider the $(n+1)$-point quadrature rule on the unit circle given by Theorem \ref{Thm-QuadRule-UnitCircle}, we have for $\mathcal{F}(z) \in span\{z^{-n},z^{-n+1},\ldots,z^{n-1},z^{n}\}$,
\begin{equation} \label{Eq-QR-UC-nu-temp}
       \int_{\T} \mathcal{F}(\zeta)\, d \nu^{(b)}(\zeta) =  \widehat{\lambda}_{n+1,n+1}^{(b)}\, \mathcal{F}(1) + \sum_{k=1}^{n}\omega_{n,k}^{(b)} \, \mathcal{F}(\xi_{n,k}^{(b)}),
\end{equation}
where $\xi_{n,k}^{(b)} = (x_{n,k}+i)/(x_{n,k}-i) $  and 
\[
       \widehat{\lambda}_{n+1,n+1}^{(b)} = \frac{(1-M_1^{(b)})(1-M_2^{(b)}) \cdots (1-M_n^{(b)})}{(1-\ell_1^{(b)})(1-\ell_2^{(b)})\cdots(1-\ell_n^{(b)})} = \frac{n!}{(2\lambda+2)_{n}}. 
\]
Here,  $x_{n,k}^{(b)}$ (the zeros of $P_{n}^{(b)}$) and $\omega_{n,k}^{(b)}$ are the nodes and weights of the $n$-point quadrature rule given by Theorem \ref{Thm-QuadRule-RealLine} with $\varphi = \varphi^{(b)}$ is as in \eqref{Eq-Routh-Romanovski-measure}.

\begin{table}[H]
\centering 
\begin{tabular}{c|r|c|r|r}
\hline
\ \ $k$ \ \ & $y_0$ \hspace{4ex} & \ \ $j$ \ \ &  $x_{8,k}^{(b)}$ \hspace{5ex} & $\omega_{8,k}^{(b)}$  \hspace{4ex}  \\ 
\hline
$8$ & \hspace{1ex} $-0.71588$ \hspace{1ex} &  $4$ & \hspace{1ex} $ -0.860951902$ \hspace{1ex} & \hspace{1ex} $ 0.001435559$\\
\hline
$7$ & $-0.20000$ \hspace{1ex} &  $4$ & $ -0.395455713$ \hspace{1ex} & $              0.013120781$\\
\hline
$6$ & $ 0.00000$ \hspace{1ex} &  $4$ & $ -0.075029910$ \hspace{1ex} & $              0.057779655$\\
\hline
$5$ & $ 0.00000$ \hspace{1ex} &  $6$ & $  0.211994598$ \hspace{1ex} & $              0.155038062$\\
\hline
$4$ & $ 0.30000$ \hspace{1ex} &  $3$ & $  0.519849212$ \hspace{1ex} & $              0.268406695$\\
\hline
$3$ &  $0.82770$ \hspace{1ex} &  $4$ & $  0.909786866$ \hspace{1ex} & $              0.291154810$\\
\hline
$2$ &  $1.29972$ \hspace{1ex} &  $4$ & $  1.509028782$ \hspace{1ex} & $              0.173690345$\\
\hline
$1$ & $2.10827$ \hspace{1ex} &  $4$ & $  2.752206638$ \hspace{1ex} & $              0.039041093$\\
\hline
\end{tabular}
\caption{Nodes and weights of the quadrature rule of Theorem  \ref{Thm-QuadRule-RealLine} for Example \ref{Eg-2} with $n=8$ and  $b = 2.5+i 2.0$,  where $y_0$ is the initial approximation for the respective zero and $j$ is the number of iterations of LRF method to achieve the precision $10^{-10}$.}
\label{table3} 
\end{table} 

\begin{table}[H]
\centering 
\begin{tabular}{c|r|c|r|r}
\hline
\ \ $k$ \ \ & $y_0$ \hspace{4ex} & \ \ $j$ \ \ &  $x_{15,k}^{(b)}$ \hspace{5ex} & $\omega_{15,k}^{(b)}$  \hspace{4ex}  \\ 
\hline
$15$ & \hspace{1ex}  $-1.41686$ \hspace{1ex} &  $4$ & \hspace{1ex}  $             -1.672044257$ \hspace{1ex} & \hspace{1ex} $  0.000036057$\\
\hline
$14$ &                  $-0.96894$ \hspace{1ex} &  $4$ &   $             -1.066959532$ \hspace{1ex} & $              0.000311365$\\
\hline
$13$ &                  $-0.67016$ \hspace{1ex} &  $4$ &   $             -0.717060414$ \hspace{1ex} & $              0.001519919$\\
\hline
$12$ &                  $-0.44218$ \hspace{1ex} &  $4$ &   $             -0.465177200$ \hspace{1ex} & $              0.005341485$\\
\hline
$11$ &                  $-0.15000$ \hspace{1ex} &  $4$ &   $             -0.260191665$ \hspace{1ex} & $              0.014845009$\\
\hline
$10$ &                   $0.00000$ \hspace{1ex} &  $5$ &   $             -0.078205917$ \hspace{1ex} & $              0.034128298$\\
\hline
$9$ &                   $0.00000$ \hspace{1ex} &  $5$ &   $              0.095146340$ \hspace{1ex} & $              0.066361078$\\
\hline
$8$ &                   $0.15000$ \hspace{1ex} &  $4$ &   $              0.270925228$ \hspace{1ex} & $              0.110088169$\\
\hline
$7$ &                   $0.44670$ \hspace{1ex} &  $3$ &   $              0.460151608$ \hspace{1ex} & $              0.155554797$\\
\hline
$6$ &                   $0.64938$ \hspace{1ex} &  $4$ &   $              0.676720369$ \hspace{1ex} & $              0.185015149$\\
\hline
$5$ &                   $0.89329$ \hspace{1ex} &  $4$ &   $              0.941766842$ \hspace{1ex} & $              0.180719442$\\
\hline
$4$ &                   $1.20681$ \hspace{1ex} &  $4$ &   $              1.292753697$ \hspace{1ex} & $              0.138672540$\\
\hline
$3$ &                   $1.64374$ \hspace{1ex} &  $4$ &   $              1.807020312$ \hspace{1ex} & $              0.077127555$\\
\hline
$2$ &                   $2.32129$ \hspace{1ex} &  $4$ &   $              2.679413438$ \hspace{1ex} & $              0.026491638$\\
\hline
$1$ &                   $3.55181$ \hspace{1ex} &  $4$ &   $              4.607169720$ \hspace{1ex} & $              0.003769069$\\
\hline
\end{tabular}
\caption{Nodes and weights of the quadrature rule of Theorem  \ref{Thm-QuadRule-RealLine} for Example \ref{Eg-2} with $n=15$ and  $b = 2.5+i 2.0$,  where $y_0$ is the initial approximation for the respective zero and $j$ is the number of iterations of LRF method to achieve the precision $10^{-10}$.}
\label{table4} 
\end{table} 

\begin{table}[H]
\centering 
\begin{tabular}{c|r|c|r|r}
\hline
\ \ $k$ \ \ & $y_0$ \hspace{4ex} & \ \ $j$ \ \  &  $x_{8,k}^{(b)}$ \hspace{5ex} & $\omega_{8,k}^{(b)}$  \hspace{4ex}  \\ 
\hline
$8$ &                  $-0.71475$ \hspace{1ex} &  $4$ &   $             -0.866362671$ \hspace{1ex} & $              0.001409047$\\
\hline
$7$ & \hspace{1ex}  $-0.20000$ \hspace{1ex} &  $4$ &  \hspace{1ex} $             -0.385089950$ \hspace{1ex} & \hspace{1ex} $  0.011285827$\\
\hline
$6$ &                   $0.00000$ \hspace{1ex} &  $4$ &   $             -0.055426036$ \hspace{1ex} & $              0.047818577$\\
\hline
$5$ &                   $0.00000$ \hspace{1ex} &  $6$ &   $              0.242186897$ \hspace{1ex} & $              0.131133033$\\
\hline
$4$ &                   $0.30000$ \hspace{1ex} &  $3$ &   $              0.567035907$ \hspace{1ex} & $              0.243675010$\\
\hline
$3$ &                   $0.89188$ \hspace{1ex} &  $4$ &   $              0.990130503$ \hspace{1ex} & $              0.296947815$\\
\hline
$2$ &                   $1.41323$ \hspace{1ex} &  $4$ &   $              1.668212121$ \hspace{1ex} & $              0.208595193$\\
\hline
$1$ &                   $2.34629$ \hspace{1ex} &  $4$ &   $              3.172646563$ \hspace{1ex} & $              0.058358497$\\
\hline
\end{tabular}
\caption{Nodes and weights of the quadrature rule of Theorem  \ref{Thm-QuadRule-RealLine} for Example \ref{Eg-2} with $n=8$ and  $b = 2.0+i 2.0$,  where $y_0$ is the initial approximation for the respective zero and $j$ is the number of iterations of LRF method to achieve the precision $10^{-10}$.}
\label{table5}
\end{table}

\begin{table}[H]
\centering 
\begin{tabular}{c|r|c|r|r}
\hline
\ \  $k$ \ \  & $y_0$ \hspace{4ex} & \ \ $j$ \ \  &  $x_{15,k}^{(b)}$ \hspace{5ex} & $\omega_{15,k}^{(b)}$ \hspace{4ex}    \\ 
\hline
$15$ & \hspace{1ex} $-1.43682$ \hspace{1ex} &  $4$ &  \hspace{1ex}  $             -1.709139557$ \hspace{1ex} & \hspace{1ex} $ 0.000047582$ \\
\hline
$14$ &                  $-0.97423$ \hspace{1ex} &  $4$ &   $             -1.076874551$ \hspace{1ex} & $              0.000329605$\\
\hline
$13$ &                  $-0.66851$ \hspace{1ex} &  $4$ &   $             -0.716927042$ \hspace{1ex} & $              0.001407835$\\
\hline
$12$ &                  $-0.43632$ \hspace{1ex} &  $4$ &   $             -0.459623282$ \hspace{1ex} & $              0.004551300$\\
\hline
$11$ &                  $-0.15000$ \hspace{1ex} &  $4$ &   $             -0.250739572$ \hspace{1ex} & $              0.012058883$\\
\hline
$10$ &                   $0.00000$ \hspace{1ex} &  $4$ &   $             -0.065156395$ \hspace{1ex} & $              0.027196733$\\
\hline
$9$ &                   $0.00000$ \hspace{1ex} &  $5$ &   $              0.112221377$ \hspace{1ex} & $              0.053180697$\\
\hline
$8$ &                   $0.15000$ \hspace{1ex} &  $4$ &   $              0.293142015$ \hspace{1ex} & $              0.090767684$\\
\hline
$7$ &                   $0.47406$ \hspace{1ex} &  $3$ &   $              0.489563410$ \hspace{1ex} & $              0.134906482$\\
\hline
$6$ &                   $0.68598$ \hspace{1ex} &  $4$ &   $              0.716961300$ \hspace{1ex} & $              0.172611607$\\
\hline
$5$ &                   $0.94436$ \hspace{1ex} &  $4$ &   $              0.999518459$ \hspace{1ex} & $              0.185747261$\\
\hline
$4$ &                   $1.28208$ \hspace{1ex} &  $4$ &   $              1.381314327$ \hspace{1ex} & $              0.161215301$\\
\hline
$3$ &                   $1.76311$ \hspace{1ex} &  $4$ &   $              1.956293085$ \hspace{1ex} & $              0.104560922$\\
\hline
$2$ &                   $2.53127$ \hspace{1ex} &  $4$ &   $              2.970861856$ \hspace{1ex} & $              0.043473255$\\
\hline
$1$ &                   $3.98543$ \hspace{1ex} &  $4$ &   $              5.358584571$ \hspace{1ex} & $              0.007880354$\\
\hline
\end{tabular}
\caption{Nodes and weights of the quadrature rule of Theorem  \ref{Thm-QuadRule-RealLine} for Example \ref{Eg-2} with $n=15$ and  $b = 2.0 +i 2.0$,  where $y_0$ is the initial approximation for the respective zero and $j$ is the number of iterations of LRF method to achieve the precision $10^{-10}$.}
\label{table6} 
\end{table} 

Tables \ref{table3}, \ref{table4}, \ref{table5} and \ref{table6}  give the results that we have obtained for the associated $8$-point and $15$-point quadrature rules of Theorem  \ref{Thm-QuadRule-RealLine}, with two different choices of $\lambda$ and $\eta$.  The values of the nodes $x_{n,k}^{(b)}$ are found using the LRF method. The results given in the last column of these table are the corresponding values of $\omega_{n,k}^{(b)}$, which we have derived with the use of \eqref{Eq22-omega-nk}. The results found in these tables will be used in Section \ref{Sec-Applications-of-QR} to determine the values of certain integrals. 

The results given in  Table\,\ref{table3} and Table\,\ref{table4} are those corresponding to  the choice $b =  2.5 + i 2.0$. The results in these tables were obtained as follows. Once the number of positive zeros and negative zeros of $P_{n}^{(b)}$ were determined by the method of Sturm sequence,  the positive zeros were successively derived using the recursive process \eqref{Eq-IncreasingLaguerre} and  the  negative zeros were then successively derived using the recursive process  \eqref{Eq-DecreasingLaguerre}. The process of determining approximations  $y_j$, $j =0,1, \ldots$,  to a $x_{n,k}^{(b)}$ using  \eqref{Eq-IncreasingLaguerre}, or \eqref{Eq-DecreasingLaguerre}, is repeated until the difference between two successive approximations $y_{j}$ and  $y_{j-1}$ becomes less than $10^{-10}$.  

The results given in  Table \ref{table5} and Table \ref{table6}, obtained similarly,  are those corresponding to the choice  $b = 2.0 + i2.0$. 

We inform that all the final approximations found in Tables \ref{table3}, \ref{table4}, \ref{table5} and \ref{table6} are  correct to all the digits presented.

\setcounter{equation}{0}
\section{Applications of the quadrature rules}
\label{Sec-Applications-of-QR}

In this section we consider some applications of the quadrature rules that we have considered in Subsection \ref{subSec-NumEx} to see their convergence to specific integrals.

The first application is with respect to the $n$-point quadrature rule on the real line given by \eqref{Eq-QR-RL-Example}. The weights in this case take the exact values $\omega_{n,k} =1/(n+1)$. Numerical results that we have obtained with respect to the evaluation of the nodes of the associated $15$-point quadrature rule are given by Example \ref{Eg-1}.  It is important to observe that when $f$ is a  polynomial of any degree (including constant), the $n$-point quadrature sum does not equals the  exact value of the integral. However, in Example\,\ref{Eg-3} below, we see that the quadrature sums provide very good approximations for the associated integral. 

\begin{example} \label{Eg-3}  {\em Consider the estimation of the integral $I = \int_{-\infty}^{\infty} (x^2+1)^{-8} e^{-x^2} dx$. 
}
\end{example}

The exact value of this integral for $13$ significant digits is $0.6133229495946$.  A possible choice of an $n$-point quadrature rule for the estimation of this integral is the $n$-point Gauss-Hermite rule. If  
\[ 
GH_{n} = \sum_{k=1}^{n} \hat{\omega}_{n,k} \frac{1}{(\hat{x}_{n,k}^2+1)^{8}}
\]
is the $n$-point Gauss-Hermite quadrature sum for the function $(x^2+1)^{-8}$, then one could expect that $GH_{n}$ converges to $I$.  The results obtained with these quadrature sums and error are given in columns $4$ and $5$ of Table \ref{table7}, respectively.  Clearly, the results are not  very satisfactory.  

However, with the quadrature rule of Theorem  \ref{Thm-QuadRule-RealLine}  we can consider also the possibility of letting 
\[
     I =  \int_{-\infty}^{\infty}  \pi(x^2+1)^{-7}e^{-x^2} \frac{1}{\pi(x^2+1)}dx = \int_{-\infty}^{\infty} \pi(x^2+1)^{-7}e^{-x^2} d \varphi(x) 
\]
and then to use the quadrature sum
\[
    I_n = \frac{\pi}{n+1}  \sum_{k=1}^{n} (x_{n,k}^2+1)^{-7}e^{-x_{n,k}^2}
\]
to estimate the above integral. The exact values of $x_{n,k}$ in the above sum are those found in Section \ref{Sec-Eample-simple}.  Results that we have obtained here and the error are presented as columns $2$ and $3$ in Table \ref{table7}, respectively. 

Even though $(x^2+1)^{n}f(x) = \pi (x^2+1)^{n-7}e^{-x^2} \notin \mathbb{P}_{2n-1}$,  the quadrature sums $I_n$ converge rapidly to $I$. 
\begin{table}[H]
\centering 
\begin{tabular}{c|c|c|c|c}
\hline 
$n$ & $I_{n}$  & $|I-I_{n}|$   & $GH_{n}$ & $|I-GH_{n}|$ \\ 
\hline 
$6$ \ & \ $0.61228678065306$ \ & \ $1.0e-03$ \ & \ $0.36007906855948$ \ & \ $2.5e-01$\\ 
\hline 
$10$ \ & $0.61332311526782$ & $1.6e-07$ & $0.50344321854055$ &$1.0e-01$\\ 
\hline 
$12$ & $0.61332296550298$ & $1.5e-08$ & $0.53991060640781$ & $7.3e-02$\\ 
\hline 
$15$ & $0.61332294881837$ & $    7.7e-10$ & $0.65430343845086$ & $4.0e-02$\\
\hline 
\end{tabular} 

\caption{Application of Gauss-Hermite quadrature rule and quadrature rule (\ref{Eq0-QR-RL}) to the integral considered in Example \ref{Eg-3}.}
\label{table7}
\end{table}

\begin{example} \label{Eg-4}  {\em Consider the  integrals $ \int_{\T}g_1(\zeta) d\zeta$ and $ \int_{\T}g_2(\zeta) d\zeta$, where  
}
\[
     g_1(\zeta) = \sin(\zeta)\, \zeta^{-2.5+i2.0} \frac{(\zeta-1)^5 }{(4-\zeta)} \quad \mbox{and} \quad g_2(\zeta) = \sin(\zeta)\, \zeta^{-2.0+i2.0} \frac{(\zeta-1)^5 }{(4-\zeta)}.
\]
\end{example}

The exact values of the above integrals are found to be 
\[
    S_1 = \int_{\T}g_1(\zeta) d\zeta = (3.52677323641868\ldots + i \, 2.86020606590488\ldots)\times 10^{-2}
\]
and
\[
   S_2 =  \int_{\T}g_2(\zeta) d\zeta = (0.33606707423377\ldots + i \, 2.80064202619193\ldots)\times 10^{-2}.
\]

Now we look into the numerical estimations of the above integrals with the use of the $(n+1)$-point quadrature rule (\ref{Eq-QR-UC-nu}) given by Theorem \ref{Thm-QuadRule-UnitCircle}. Clearly, 
\[
      \int_{\T}g_1(\zeta) d\zeta = \frac{1}{\tau(2.5+i2.0)} \int_{\T} \mathcal{F}_1(\zeta)\, d \nu^{(2.5+i2.0)}(\zeta)
\]
and
\[
      \int_{\T}g_2(\zeta) d\zeta = \frac{1}{\tau(2.5+i2.0)} \int_{\T} \mathcal{F}_2(\zeta)\, d \nu^{(2.5+i2.0)}(\zeta),
\]
where $\nu^{(2.5+i2.0)}$ and $\tau(2.5+i2.0)$ are as in \eqref{CRR-Measures},  and 
\[
     \mathcal{F}_1(\zeta) = \frac{\zeta \sin(\zeta)}{4-\zeta} \quad \mbox{and} \quad \mathcal{F}_2(\zeta) = \frac{\zeta^{1.5} \sin(\zeta)}{4-\zeta}. 
\]
Thus, we can estimate the integrals $\int_{\T} \mathcal{F}_1(\zeta)\, d \nu^{(2.5+i2.0)}(\zeta)$ and $\int_{\T} \mathcal{F}_2(\zeta)\, d \nu^{(2.5+i2.0)}(\zeta)$, using the nodes and weights obtained in Tables \ref{table3} and \ref{table4}. With the knowledge that
\[
     \tau(2.5+i2.0) = -2.26887229599887\ldots,
\]
the results obtained with the associated $(8+1)$-point and $(15+1)$-point quadrature rules given by  \eqref{Eq-QR-UC-nu-temp} are in Tables \ref{table8} and \ref{table9}. 

\begin{table}[H]
\centering
\begin{tabular}{c|c|c}
\hline 
($n+1$) & approximation for $S_1$ & absolute error   \\ 
\hline 
$(8+1)$&$     3.52677470437557e-02+  i   2.86021897172897e-02$ &$                   1.3e-07$\\
\hline  
$(15+1)$&$     3.52677323654955e-02+  i   2.86020606599670e-02$ &$                   1.6e-12$\\ 
\hline 
\end{tabular}
\caption{Application of the $(n+1)$-point quadrature rule in $\frac{1}{\tau(2.5+i2.0)} \int_{\T} \mathcal{F}_1(\zeta)\, d \nu^{(2.5+i2.0)}(\zeta)$.}
\label{table8}
\end{table}

\begin{table}[H]
\centering
\begin{tabular}{c|c|c}
\hline 
($n+1$) & approximation for $S_2$ & absolute error  \\ 
\hline 
$(8+1)$&$  3.36088971644750e-03+   i 2.80129450967408e-02$ &$                   6.5e-06$\\ 
\hline  
$(15+1)$&$ 3.36059488687229e-03+   i 2.80065722184178e-02$ &$            1.7e-07$\\
\hline 
\end{tabular}
\caption{Application of the $(n+1)$-point quadrature rule in $\frac{1}{\tau(2.5+i2.0)} \int_{\T} \mathcal{F}_2(\zeta)\, d \nu^{(2.5+i2.0)}(\zeta)$.}
\label{table9}
\end{table}

However, the results obtained for $S_2$ is not very satisfactory. This we believe is because of the component $\zeta^{1/2}$ in $\mathcal{F}_2(\zeta)$.  We can remedy this by considering 
\[
    S_2 =  \int_{\T}g_2(\zeta) d\zeta = \frac{1}{\tau(2.0+i2.0)} \int_{\T} \hat{\mathcal{F}}_2(\zeta)\, d \nu^{(2.0+i2.0)}(\zeta), 
\]
where  
\[
    \hat{\mathcal{F}}_2(\zeta) = (\zeta -1) \frac{\zeta \sin(\zeta)}{4-\zeta}.  
\]
Thus, we can estimate the integral  $\int_{\T} \hat{\mathcal{F}}_2(\zeta)\, d \nu^{(2.0+i2.0)}(\zeta)/\tau(2.0+i2.0)$, see Table \ref{table10}, using the nodes and weights obtained in Tables \ref{table5} and \ref{table6}. The convergence is again very good. 

\begin{table}[H]
\centering 
\begin{tabular}{c|c|c}
\hline 
($n+1$) & approximation for $S_2$ & absolute error  \\ 
\hline 
$(8+1)$ &  $ 3.36106005666877e-03+    i 2.80065917218239e-02$ &$                   4.3e-07$\\  
\hline  
$(15+1)$ & $ 3.36067074166006e-03+   i  2.80064202570487e-02$ &$                   4.9e-12$\\ 
\hline 
\end{tabular}
\caption{Application of the $(n+1)$-point quadrature rule in $\frac{1}{\tau(2.0+i2.0)} \int_{\T} \hat{\mathcal{F}}_2(\zeta)\, d \nu^{(2.0+i2.0)}(\zeta)$.}
\label{table10}
\end{table}

\end{document}